\documentclass{amsart}
\usepackage{verbatim,amscd, mathabx}

\makeatletter
\def\l@section{\@tocline{1}{0pt}{1pc}{}{}}
\def\l@subsection{\@tocline{2}{0pt}{1pc}{4.6em}{}}
\def\l@subsubsection{\@tocline{3}{0pt}{1pc}{7.6em}{}}
\renewcommand{\tocsection}[3]{%
  \indentlabel{\@ifnotempty{#2}{\makebox[2.3em][l]{%
    \ignorespaces#1 #2.\hfill}}}#3}
\renewcommand{\tocsubsection}[3]{%
  \indentlabel{\@ifnotempty{#2}{\hspace*{2.3em}\makebox[2.3em][l]{%
    \ignorespaces#1 #2.\hfill}}}#3}
\renewcommand{\tocsubsubsection}[3]{%
  \indentlabel{\@ifnotempty{#2}{\hspace*{4.6em}\makebox[3em][l]{%
    \ignorespaces#1 #2.\hfill}}}#3}
\makeatother

\setcounter{tocdepth}{4}

\newtheorem{theorem}{Theorem}[section]
\newtheorem{lemma}[theorem]{Lemma}
\newtheorem{proposition}[theorem]{Proposition}

\theoremstyle{definition}

\newtheorem{example}[theorem]{Example}

\theoremstyle{remark}
\newtheorem{remark}[theorem]{Remark}

\numberwithin{equation}{section}

\newcommand{\R}{\mathbb{R}}
\newcommand{\N}{\mathbb{N}}

\newcommand{\X}{\mathrm{X}}

\newcommand{\Y}{\mathrm{Y}}

\newcommand{\1}{\boldsymbol{1}}

\newcommand{\esssup}{\mathop{\rm ess\ sup}}

\hyphenation{ortho-normal}
\newcommand{\vep}{\varepsilon}
\renewcommand{\epsilon}{\vep}



\begin{document}

\title{ODE to $L^p$ norms}

\author{Jarno Talponen}
\address{University of Eastern Finland\\Institute of Mathematics\\Box 111\\FI-80101 Joensuu\\Finland}
\email{talponen@iki.fi}

\keywords{Banach function space, Orlicz space, Nakano space, $L^p$ space, varying exponent, variable exponent, differential equation, ODE, weak solution}
\subjclass[2010]{Primary 46E30; 34A12; Secondary 46B10; 31B10}
\date{\today}

\begin{abstract}
In this paper we relate the geometry of Banach spaces to the theory of differential equations, apparently in a new way.
We will construct Banach function space norms arising as weak solutions to ordinary differential equations (ODE)
of the first order. This provides as a special case a new way of defining varying exponent $L^p$ spaces, 
different from the Musielak-Orlicz type approach. We explain heuristically how the definition of the norm by means of the particular ODE is justified.
The resulting  class of spaces includes the classical $L^p$ spaces as a special case. A noteworthy detail regarding our $L^{p(\cdot)}$ norms is that they satisfy H\"older's inequality (properly). 
\end{abstract}

\maketitle


 \section{Introduction}
In this paper we introduce a novel way of defining function space norms by means of weak solutions to ordinary differential equations (ODE). This provides a new perspective for looking at varying exponent $L^p$ spaces. 

The classical Birnbaum-Orlicz  norms were defined in the 1930's, and since then there have been various generalizations of the these norms to several directions. Notable examples of norms and spaces carry names such as Besov,  Lizorkin, Lorentz, Luxemburg, Musielak, Nakano,  Orlicz,  Triebel, Zygmund, see e.g. \cite{bo}, \cite{lux}, \cite{bej}, \cite{mus}. These norms have been recently applied to other areas 
of mathematics as well as to some real-world applications, see e.g.  \cite{die}, \cite{rr2}. Roughly speaking, these norms can be viewed as belonging to a family of derivatives of the Minkowski functional.
This kind of approach leads to several varying exponent $L^{p(\cdot)}$ type constructions, e.g. for sequence spaces, Lebesgue spaces, Hardy spaces and Sobolev spaces. There is a vast literature on these topics, see \cite{kovacik}, \cite{LT}, \cite{nakai} and \cite{rr} for samples and further references. There are also other ways of looking at the varying exponent $L^p$ spaces, such as the Marcinkiewicz space,
whose approach differs from the one mentioned above, see \cite{mar}.

Let us recall that the general Nakano or Musielak-Orlicz type norms are defined as follows:
\[|||f||| =\inf \left\{\lambda>0 \colon \int_\Omega \phi\left(\frac{|f(t)|}{\lambda},t\right)\ dm(t)\leq 1\right\}.\]
Here $\phi$ is a positive function satisfying suitable structural conditions. For instance, $\phi(s,t)=s^{p(t)}$, or 
$\psi(s,t)=\frac{s^{p(t)}}{p(t)}$, $1\leq p(\cdot) <\infty$, produces a norm that can be seen as a varying exponent $L^p$ norm. In the latter case we use the name \emph{Nakano norm} (cf. \cite{JKL}), which, as it turns out, is of particular interest in this paper.

In contrast, the basic form of the norm that will be introduced here differs considerably from the above-mentioned norms in the sense 
that it does not arise as a derivation of the Minkowski functional, and it does not apply any norming set of functionals either. 
In some cases the classes of spaces introduced here do not coincide as sets with any of the classes mentioned above for a given $p\colon [0,1]\to [1,\infty)$ measurable. This is due to obstructions that will become obvious shortly. However, roughly speaking, the norms studied here are equivalent to the 
Nakano norms; see Proposition \ref{prop: upineq}. 

The above norms enjoy the attractive property of being rearrangement invariant in the sense that applying a measure-preserving transformation $T\colon \Omega\to \Omega$ onto
such that $\psi(|f(x)|,x)=\psi(|f\circ T(x)|,T(x))$ for a.e. $x\in \Omega$ (see the above Nakano norm) does not change the value of the norm. However, one may argue that the rearrangement invariance and the apparent simplicity of the definition of the norm come with a cost. Namely, the definition of the norm is opaque in the sense that it involves an infimum with an integral formula inequality having rather complicated interdependencies at the binding surface of the feasible set. For instance, by looking at the definition of the norm it is difficult to decide how adding $1_{\Delta}$, $\Delta\subsetneq \Omega$ measurable, $m(\Delta)>0$, to $f$ contributes to the norm, even if $\Delta$ is in some sense conveniently displaced. Here $1_\Delta$ is the characteristic function of the set $\Delta$.

The `virtues and vices' of the norms about to be introduced are mirror images of the ones mentioned above. The ODE driven norms here, in comparison, will typically not be rearrangement invariant in the above sense, and in particular they do not reduce isometrically to the above Nakano norms (e.g. an example after Proposition \ref{prop: upineq}, cf. an example in \cite{talponen}). On the other hand, our norms will be `localized' in the sense that one can analyze the (infinitesimal) contribution of a single coordinate to the norm so far, a built-in feature of the construction. To make a point, it is possible to compute these norms by solving the defining ODE numerically for continuous functions $f$ and $p$. 
(It is, of course, also possible to compute the above infimum numerically, but we stress the fact that the methods needed to solve our first order ODE are linear in nature and elementary.) 
Thus, our approach to the definition of varying exponent $L^p$ space norms is rather inductive than global. 

Next we will discuss the motivating ideas behind the ODE driven norms. The author studied in \cite{talponen} varying exponent $\ell^{p(\cdot)}$ spaces formed in the following na\"ive fashion. As usual, we denote by $\X \oplus_p \Y$ the direct sum of Banach spaces $\X$ and $\Y$ with the norm given by 
\[\|(x,y)\|_{\X \oplus_p \Y}^p =\|x\|_{\X}^p + \|y\|_{\Y}^p,\quad x\in \X,\ y\in \Y,\ 1\leq p < \infty .\]
Let $p\colon \N \to [1,\infty)$ be a `varying exponent'. Define first a $2$-dimensional Banach space by $\R \oplus_{p(1)} \R$, then a $3$-dimensional one $(\R \oplus_{p(1)} \R)\oplus_{p(2)} \R$ and proceed recursively to obtain $n$-dimensional spaces
\[(\ldots ((\R \oplus_{p(1)} \R)\oplus_{p(2)} \R) \oplus_{p(3)} \ldots )\oplus_{p(n-1)} \R,\]
and, finally, by taking an inverse limit, this yields a space which can be written formally as 
\[\ldots (\ldots ((\R \oplus_{p(1)} \R)\oplus_{p(2)} \R) \oplus_{p(3)} \ldots )\oplus_{p(n)} \R)\oplus_{p(n+1)} \ldots\quad .\]
Thus, this is a space normed by taking a limit of semi-norms corresponding to the $n$-dimensional spaces above. The recursive construction of the spaces can be regarded trivial at each step, but the end result may exhibit some peculiar properties,
depending on the selection of the sequence $(p(n))_{n\in\N}$, see \cite{talponen}. For instance, it provides an \emph{easy} example of a separable Banach space $\X$ with a $1$-unconditional basis such that $\X$ contains all spaces $\ell^p$, $1\leq p<\infty$, almost isometrically. In any case, this appears a rather natural way of constructing Banach sequence spaces and appears to have been first discovered by A. Sobczyk and J.W. Tukey \footnote{The author presented some of the ideas in this paper in the following function space related meetings in 2014: Edwardsville, Albacete, Mekrijarvi; in 2015: 
Kitakyushu, Delhi. The author is grateful to L. Maligranda for bringing the above historical account to author's attention in Kitakyushu (see also \cite{Maligranda}).} (see \cite[p. 96]{Sobczyk}, cf. Kalton et al. \cite{kalton1,kalton2}).

The main aim of this paper is to study `a continuous version' of the above class of sequence spaces $\ell^{p(\cdot)}$, thus a space of suitable functions $f\colon [0,1]\to \R$, instead of sequences. The idea is somewhat similar here, knowing the norm of $f$ up to a coordinate $0<t<1$, 
i.e. $\|1_{[0,t]}f\|$, and knowing the value $|f(t^+ )|$ is sufficient information in predicting the accumulation of the norm right after $t$, i.e. knowing $\|1_{[0,t + dt]}f\|$. 
For example, if $f(r)=0$ for $t< r < s$, then we should have 
$\|1_{[0,t]}f\|=\|1_{[0,s]}f\|$, and if $|f(t^+ )|>0$, then $\|1_{[0,t]}f\|<\|1_{[0,s]}f\|$, and so on.
This intuitive description of the accumulation of the norm is captured by a suitable ODE in such a way that its weak solution, $\varphi_f \colon [0,1]\to [0,\infty)$, shall represent the norm as follows:
\begin{equation}\label{eq: main}
\varphi_f (t)=\|1_{[0,t]}f\|,
\end{equation}
so that in particular $\varphi_ f (0)=0$ and $\varphi_ f (1)=\|f\|$. The above equation \eqref{eq: main} neatly outlines the overall strategy implemented in the beginning of the paper. The basic idea in accomplishing this and the heuristic motivation appear shortly, see Section 1.2. Differential equations have been previously studied in connection to varying exponent spaces and Sobolev spaces (see. e.g. \cite{die0}) but apparently not in the same vein as they arise here.

The required mathematical machinery in this paper is classical, and there is no apparent reason why this alternative approach could have not been experimented with much earlier. 
Also, our approach does not lead to excessively technical considerations, so hopefully it is accessible to a wide range of analysts.

\subsection{Preliminaries and auxiliary results}

We will usually consider the unit interval $[0,1]$ endowed with the Lebesgue measure $m$. 
Here for almost every (a.e.) refers to $m$-a.e., unless otherwise specified. 
Denote by $L^0$ the space of Lebesgue-to-Borel measurable functions on the unit interval.
We denote by $\ell^0 (\N)$ the vector space of sequences of real numbers with point-wise operations.
We refer to \cite{DE_book}, \cite{FA_book},\cite{LT} and \cite{rudin} for suitable background information.

We will mainly study here varying exponent $L^p$ spaces with ODE-determined norm, denoted by $L^{p(\cdot)}$ and 
$\|\cdot\|_{p(\cdot)}$ here, respectively. The author considers these notations intuitive, even though in the literature the Nakano spaces and norms sometimes bear such notations. Therefore, when Nakano norms are considered here, they are explicitly specified and are denoted by $|||\cdot|||$ to clearly distinguish them. 

We will study Carath\'eodory's weak formulation to ODEs, that is, in the sense of Picard type integral formulation, where solutions are required to be only absolutely continuous. This means that, given an ODE 
\[\varphi(0)=x_0 ,\ \varphi'(t)=\Theta(\varphi(t),t),\quad \mathrm{for\ a.e.}\ t\in [0,1],\]
we call $\varphi$ a weak solution in the sense of Carath\'eodory if $\varphi$ is absolutely continuous, $t\mapsto  \Theta(\varphi(t),t)$ is measurable and 
\[\varphi(T)= x_0 + \int_{0}^T \Theta(\varphi(t),t)\ dt\]
holds for all $T\in [0,1]$, where the integral is the Lebesgue integral. In what follows, we will refer to Carath\'eodory's solutions simply as solutions.

Whenever we make a statement about a derivative we implicitly state that it exists. 
We will write $F\leq G$, involving elements of $L^0$, if $F(t)\leq G(t)$ for a.e. $t\in [0,1]$.
We denote the characteristic function or indicator function by $1_A$ defined by $1_A (x)=1$ if $x\in A$
and $1_A (x)=0$ otherwise.

\begin{lemma}\label{lm: second}
Suppose that $\varphi,\psi \in C[0,1]$ are absolutely continuous such that  $\varphi(0)\leq \psi(0)$ and
\[\varphi(t)\geq \psi(t) \Rightarrow \varphi' (t)\leq \psi' (t) \ \text{a.e.}\] 
Then $\varphi\leq \psi$.
\end{lemma}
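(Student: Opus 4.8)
\emph{Proof proposal.} The plan is the standard one-dimensional comparison (``last crossing'') argument. Put $h=\varphi-\psi$, which is again absolutely continuous, and rewrite the hypothesis as: for a.e.\ $t\in[0,1]$, $h(t)\ge 0$ implies $h'(t)\le 0$. One also needs the initial inequality $\varphi(0)\le\psi(0)$, i.e.\ $h(0)\le 0$ (without it the constants $\varphi\equiv 2$, $\psi\equiv 1$ would violate the statement); this is automatic in the intended applications, where typically $\varphi(0)=\psi(0)=0$. We must show $h(t)\le 0$ for every $t\in[0,1]$.

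Suppose not; then $h(t_1)>0$ for some $t_1\in(0,1]$ (necessarily $t_1>0$, since $h(0)\le 0$). Set
\[
t_0=\sup\{\,t\in[0,t_1]\ :\ h(t)\le 0\,\},
\]
which is well defined because $0$ belongs to the set, and $t_0<t_1$ since $h(t_1)>0$ and $h$ is continuous. Continuity of $h$ gives $h(t_0)\le 0$, while the definition of the supremum forces $h(t)>0$ for \emph{every} $t\in(t_0,t_1]$. On that sub-interval, where $h$ is everywhere strictly positive, the hypothesis now yields $h'(t)\le 0$ for a.e.\ $t\in(t_0,t_1]$, and since $h$ is absolutely continuous the fundamental theorem of calculus gives
\[
h(t_1)-h(t_0)=\int_{t_0}^{t_1}h'(t)\,dt\le 0,
\]
so $h(t_1)\le h(t_0)\le 0$, a contradiction. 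Hence $\varphi\le\psi$.

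The only delicate point is the bookkeeping with the ``a.e.'' in the hypothesis: the implication $h\ge 0\Rightarrow h'\le 0$ is assumed only off a null set, so one cannot invoke it at an individual point; one must first isolate a sub-interval on which $h$ is positive \emph{everywhere}, and that is precisely what the ``last time $h\le 0$'' construction delivers. After that, absolute continuity ($h'\in L^1$ with $\int_{t_0}^{t_1}h'=h(t_1)-h(t_0)$) finishes the argument, and — in contrast to Lemma~\ref{lm: } — there is no issue of jump discontinuities here, since both $\varphi$ and $\psi$ are assumed absolutely continuous. I expect no real obstacle beyond this and the initial-value caveat.
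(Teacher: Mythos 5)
Your argument is correct, but it is a genuinely different proof from the one in the paper. The paper's proof is a one-line trick: it observes that $\varphi'(t)\leq(\min(\varphi,\psi))'(t)$ for a.e.\ $t$ (where $\varphi>\psi$ the minimum is locally $\psi$ and the hypothesis gives $\varphi'\leq\psi'$; where $\varphi<\psi$ the minimum is locally $\varphi$; at a.e.\ point of the set $\{\varphi=\psi\}$ all three derivatives coincide), and then integrates this inequality, using absolute continuity of $\min(\varphi,\psi)$, to get $\varphi(T)-\varphi(0)\leq\min(\varphi,\psi)(T)-\min(\varphi,\psi)(0)\leq\psi(T)-\varphi(0)$ once $\varphi(0)\leq\psi(0)$. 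You instead run the classical ``last crossing'' contradiction on $h=\varphi-\psi$, which is equally valid and arguably more transparent: your step of isolating a subinterval on which $h$ is \emph{everywhere} positive is exactly what makes the a.e.\ hypothesis usable, a point the paper's proof glosses over. Both arguments need the initial comparison $\varphi(0)\leq\psi(0)$, which the lemma as stated omits; you are right to flag this (the counterexample $\varphi\equiv2$, $\psi\equiv1$ shows the statement is false without it), and indeed in every application in the paper the solutions share an initial value, so the omission is an oversight in the statement rather than in the substance. The paper's route buys brevity and avoids any case analysis; yours buys robustness and makes the role of the null set explicit.
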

\begin{proof}
Observe that 
\[\varphi' (t) \leq (\min(\varphi,\psi))' ,\quad \mathrm{for\ a.e.}\ t\in [0,1].\]
\end{proof}

We will frequently calculate terms of the form $(a^p + b^p)^{\frac{1}{p}}$ where $a,b\geq 0$ and 
$1\leq p <\infty$. We will adopt from \cite{talponen} the following short hand notation for this:
\[a \boxplus_p b = (a^p + b^p)^{\frac{1}{p}}.\]
This defines a commutative semi-group on $\R_+$, in particular, the associativity
\[a \boxplus_p (b \boxplus_p c )=(a \boxplus_p b) \boxplus_p c ,\]
is useful.

The space $\ell^{p(\cdot)} \subset \ell^0$, $p\colon \N \to [1,\infty)$, consists of those elements $(x_n)$ such that the 
following limit of a non-decreasing sequence exists and is finite:
\[\lim_{n\to \infty} (\ldots (((|x_1 | \boxplus_{p(1)} |x_2 |)\boxplus_{p(2)} |x_3 |) \boxplus_{p(3)} |x_4 |)\boxplus_{p(4)}\ldots \boxplus_{p(n-1)}|x_n |)\boxplus_{p(n)}|x_{n+1}|\]
and the above limit becomes the norm of the space, see \cite{talponen}.

\subsection{Arriving at the varying exponent $L^p$ norm ODE}

Let us `derive' heuristically our basic differential equation for varying exponent $L^p$ norm. As mentioned in the introduction, we wish to extend the varying exponent $\ell^{p(\cdot)}$ norm in the sense of \cite{talponen} to a continuous setting. Although the motivation for the task here involves the
above  sequence spaces, we are only required to look at simple structures $\X \oplus_p \Y$ one at a time due to the infinitesimal nature of the enterprise.

We will assume a Platonist approach on developing the definition of the varying exponent norms here. Thus we wish to find a function space norm following
 the gist of $\ell^{p(\cdot)}$ space norms. This leads to thought experiments on the right behavior of the 
function $t\mapsto \|1_{[0,t]} f\|$. In a sense, the resulting ODE will be a very robust one, and this allows us to write arguments in this paper 
in a concise fashion, not paying very much attention on the general theory of the ODEs involved. 

Suppose that we have a varying exponent, i.e. a measurable function $p\colon [0,1]\to [1,\infty)$ and $f\colon [0,1]\to \R$ is another measurable function, a possible candidate to lie in the function space. 
We wish to arrange matters in such a way that we have an absolutely continuous non-decreasing function $\varphi_f \colon [0,1]\to [0,\infty)$ such that 
\[\varphi_f (t)=\|1_{[0,t]}f\|, \quad 0\leq t \leq 1,\]
so $\varphi_f (0)=0$ and $\varphi_f (1)=\|f\|<\infty$. 

For example, in the classical case of $L^p$ spaces with a constant function $f=\boldsymbol{1}$ and $p=1,2,\infty$ we have 
\[\varphi_{f,1} (t)=t,\ \varphi_{f,2} (t)=\sqrt{t},\ \mathrm{and}\ \varphi_{f,\infty} (t)=1_{(0,1]}(t),\] 
respectively. Here the $p$-norms are $1$ but the profiles differ
considerably. The first two solutions are absolutely continuous and the last one is not even continuous. 

We will study Carath\'eodory's weak formulation to ODEs. It is convenient to work with absolutely continuous solutions, since this way we may apply such usual tools as Fatou's lemma and Lebesgue's convergence theorems on the solutions (sometimes implicitly). We are only interested here in Banach lattice norms, therefore $\varphi_f$ is always non-decreasing here. In fact, we will require a mildly modified version of Carath\'eodory's weak formulation, tailor-made specifically to our setting.

We are aiming at a recursive-like formula for $\varphi_f$, in a similar spirit as in \cite{talponen}, so suppose that we have defined the function $\varphi_f$ up to the interval $[0,t_0 ]$. Then we are not interested in the values of $f$ and $p$ on $[0,t_0 )$, a Markovian type condition. 
Suppose, as a thought experiment, that $f$ and $p$ are constant on an interval\\ $[t_0 ,t_0 +\Delta]$ where $\Delta>0$. Then we should have 
\begin{equation}\label{eq: phiDelta}
\begin{split}
\varphi(t_0 +\Delta)&=(\varphi(t_0 )^{p(t_0 )}+\Delta |f(t_0 )|^{p(t_0 )})^{1/p(t_0 )},\\
&=\varphi(t_0 ) \boxplus_{p(t_0 )} \Delta^{1/p(t_0 )}|f(t_0 )|
\end{split}
\end{equation}
analogous to the $\ell^{p(\cdot)}$ construction, and actually to the usual $L^p$ norm formula, since 
\begin{multline*}
\left(\int_{0}^{t_0 +\Delta} |f(s)|^p\ dm(s)\right)^{\frac{1}{p}}=\left(\int_{0}^{t_0} |f(s)|^p\ dm(s)\right)^{\frac{1}{p}}\boxplus_p  \left(\int_{t_0 }^{t_0 +\Delta} |f(t_0 )|^p\ dm(s)\right)^{\frac{1}{p}}
\end{multline*}
where the right-most term is $\Delta^{1/p(t_0 )}|f(t_0 )|$.
Thus, by differentiating \eqref{eq: phiDelta} we find a natural candidate for the norm-determining differential equation:
\begin{equation}\label{eq: diff1}
\frac{d^+}{d\Delta}\varphi(t_0 +\Delta)\bigg|_{\Delta=0}=\frac{|f(t_0 )|^{p(t_0 )}}{p(t_0 )}\varphi(t_0 )^{1-p(t_0 )}.
\end{equation}
Here $\frac{d^+}{d\Delta}$ denotes the right-sided derivative and we set $\Delta=0$, because we are interested in `infinitesimal' increments around $t_0$. 
So, the above equation is right if $f$ and $\varphi$ are constant on the interval $[t_0 ,t_0 +\Delta]$, but the equation does \emph{not} concern the values of $f$, $\varphi$ and $p$ \emph{beyond} $t_0$.

In formulating the differential equation we do not require $f$ or $p$ to be continuous anywhere, but motivated by Lusin' s theorem and related considerations we will use the above formula in any case and aim to define
$\varphi$ by 
\begin{equation} \label{eq: def0}
\varphi(0)=0 ,\ \varphi'(t)=\frac{|f(t)|^{p(t)}}{p(t)}\varphi(t)^{1-p(t)}\quad \mathrm{for\ a.e.}\ t\in [0,1].
\end{equation}
Looking at this ODE it becomes evident that if there is a solution $\varphi_f$ corresponding to $f$, then there is also a solution $\varphi_{cf}$ corresponding to $cf$ for any constant $c\in \R$ and moreover the functional 
$f \mapsto \varphi_{f} (1)$ is positively homogenous (up to the uniqueness of the solutions). 

This formulation has the drawback that $0^{1-p(t)}$ is not defined. Also, it has a trivial solution $\varphi \equiv 0$, regardless of the values of $f$, if we use the convention $0^0 =0$ and $p\equiv 1$. Also, following this idea it is possible to construct other degenerate solutions such that $\varphi$ vanishes on $[0,t]$ for any $0< t <1$.
The behavior of the solutions is difficult to anticipate in the case where $\varphi(t)$ is small and $p(t)$ is large.

To fix these issues, we will consider \emph{stabilized} solutions to the above initial value problem. Namely, we will use initial values $\varphi(0)={x_0}>0$ and to correct the error incurred we let ${x_0}\searrow 0$. It turns out that the corresponding unique solutions $\varphi_{{x_0}}$ decreasingly converge point-wise to $\varphi$ which again satisfies the same ODE. So, this procedure yields a unique solution $\varphi$ which we will formulate, by a slight abuse of notation, as
\begin{equation}\label{eq: def}
\varphi(0 )=0^+ ,\ \varphi'(t)=\frac{|f(t)|^{p(t)}}{p(t)}\varphi(t)^{1-p(t)}\quad \mathrm{for\ a.e.}\ t\in [0,1].
\end{equation}
There is more to the above procedure than merely picking a maximal solution; it turns out that in many situations it is convenient to look at positive-initial-value solutions first.
\newcommand{\supp}{\mathrm{supp}}
By using Lebesgue's monotone convergence theorem and Lemma \ref{lm: second} one easily verifies that if for each $x_0 >0$ there is
$\varphi_{x_0}$, a solution to \eqref{eq: def0}, except with initial value $\varphi_{x_0}(0)=x_0$, then 
there is  $\varphi$, a unique solution to \eqref{eq: def0}, such that 
$\varphi_{x_0} \searrow \varphi$ uniformly and $\varphi_{x_0}' \nearrow \varphi'$ in $L^1$ 
as $x_0 \searrow 0$. This unique solution is referred to by \eqref{eq: def}.
Moreover, if such $0^+$-initial value solution exists, then for each positive initial value $x_0 >0$ a corresponding solution exists
by suitable Picard iteration and Lemma \ref{lm: second}.

We define the varying exponent class $L^{p(\cdot)}\subset L^0$ (ODE-determined) as the set of those functions $f\in L^0$ such that  $\varphi_f$ exists as an absolutely continuous solution to \eqref{eq: def} and $\varphi_f (1)<\infty$.
In many cases, but not always, the class becomes a linear space. In such a case the norm can be defined as $f\mapsto \varphi_f (1)$. 

{\noindent \bf Warning 1:} Even if the class $L^{p(\cdot)}$ fails to be a linear space, we sometimes write $\|f\|_{L^{p(\cdot)}} =\|f\|_{p(\cdot)} :=\varphi_f (1)$ where $\varphi_f$ is the solution to \eqref{eq: def}.

{\noindent \bf Warning 2:} As explained above, the class $L^{p(\cdot)}$ and the mapping $f\mapsto \|f\|_{p(\cdot)}$ may differ from the Nakano space and the corresponding norm which are often denoted by the same symbols in the literature. 
      
The above ODE is a separable one for a constant $p(\cdot)\equiv p$, $1\leq p<\infty$, and solving it (see  \eqref{eq: sep}) yields $\varphi_f (1)^{p}=\int_{0}^{1}|f(t)|^p \ dt$, compatible with the classical definition of the $L^p$ norm.
If $p(\cdot)$ is locally bounded and $|f(t)|^{p(t)}$ is locally integrable, then Picard iteration performed locally yields a unique solution for each initial value $\varphi(0)=a>0$, possibly $\varphi(s)\to \infty$ as $s\nearrow  r$ for some $0< r \leq1$. 

\section{Constructions of ODE-determined $L^{p(\cdot)}$ spaces}
In this section we will study only spaces of the type $L^{p(\cdot)}$ with $p\colon [0,1]\to [1,\infty)$ measurable. Some of the unbounded functions $p(\cdot)$ actually produce a class of functions, 
rather than a linear space (see Example \ref{ex: notin}). We will first restrict our considerations to those $L^{p(\cdot)}$ classes which 
are Banach spaces (see Theorem \ref{prop: coincidence} below).
The norms of these spaces were described in the introductory part.

\subsection{Transcending from discrete to continuous state}

We will traverse from varying exponent sequence spaces to such function spaces through an intermediate 
notion which we call simple semi-norm. Let us first we define a very simple semi-norm by the formula
\[|f|_{p, \mu} = \left(\int |f|^p\ d\mu \right)^{\frac{1}{p}}\]
where $\mu$ is a restricted Lebesgue measure with $\mathrm{supp}( \mu )\subset  [0,1]$. Let us consider 
such measures $\mu_i$ with $\max \mathrm{supp}(\mu_i)\leq \min \mathrm{supp}(\mu_{i+1})$, $1\leq i \leq n-1$ and 'exponent constants' $p_i \in [1,\infty)$. Then we may define a composite semi-norm as follows
\begin{multline}\label{eq: Nsemi}
\|f\|_{(\ldots (L^{p_1}(\mu_1 )\oplus_{r_2}L^{p_2}(\mu_2 ))\oplus_{r_3} \ldots  \oplus_{r_{n-1}} L^{p_{n-1}}(\mu_{n-1}))\oplus_{r_{n}} L^{p_{n}}(\mu_n ) }\\
:=(\ldots (|f|_{p_1 , \mu_1}\boxplus_{r_2} |f|_{p_2 , \mu_2}) \boxplus_{r_3} \ldots \boxplus_{r_{n-1}} |f|_{p_{n-1} , \mu_{n-1}})\boxplus_{r_{n}} |f|_{p_{n} ,\mu_{n}}\\
=(\dots (\ldots ((|f|_{p_1 , \mu_1}^{r_2} + |f|_{p_2 , \mu_2}^{r_2})^{\frac{r_3}{r_2}} + 
|f|_{p_3 , \mu_3}^{r_3})^{\frac{r_4}{r_3}} + \ldots |f|_{p_{n-1} , \mu_{n-1}}^{r_{n-1}})^{\frac{r_n}{r_{n-1}}} + |f|_{p_n , \mu_n}^{r_n})^{\frac{1}{r_n}}
\end{multline}
where $\max\mathrm{supp}(\mu_i)\leq \min \mathrm{supp}(\mu_{i+1})$, $r_{i+1}\geq p_{i+1}$.
We will frequently consider simple semi-norms 
\[\|f\|_N = \|f\|_{(\ldots (L^{p_1}(\mu_1 )\oplus_{r_2}L^{p_2}(\mu_2 ))\oplus_{r_3} \ldots )\oplus_{r_n} L^{p_{n}}(\mu_n )}\]
and denote this collection by $\mathcal{N}$. Thus $N$ is a name for a semi-norm, and it is recursively 
defined in \eqref{eq: Nsemi}.
Let us say that a semi-norm is of \emph{standard form} if $r_{i+1} = p_{i+1}$ for all $i\in\{1,\ldots,n-1\}$. 
In this case we may, in a sense, extend the elements $N$ of $\mathcal{N}$ to a $L^{\tilde{p}(\cdot)}$ norm by putting 
$\tilde{p}_N (t)=p_i$ for $t\in \mathrm{supp}(\mu_i )$ and $\tilde{p}_N (t)=1$ otherwise (and this extension is unique). 
If $\bigcup_i \mathrm{supp} (\mu_i )= [0,1]$ then a corresponding standard form norm $N\in \mathcal{N}$ satisfies $\|f\|_N = \|f\|_{\tilde{p}(\cdot)}$ 
by subsequent Lemma \ref{lm: approx}.

Observe that the semi-norms are decreasing on $r$:s and increasing on $p$:s. Consider point-wise intervals $[p_t , r_t]$ as follows: 
$p_t =p_{i+1}$ and $r_t = r_{i+1}$ on the support of $\mu_{i+1}$. We denote by $N \preceq p(\cdot)$ whenever $p(t)\in [p_t , r_t]$ for all $t$ such that the interval is defined.

We may define a partial order on $\mathcal{N}$ by setting $N \preceq M$ if the following conditions hold:
\begin{enumerate}
\item A partition given by the supports of the measures corresponding to $N$ is refined by the supports of the measures corresponding to $M$:
\[\forall \mu_{N,i} \ \ \exists \mu_{M, j^{(i)}_{1}}, \ldots, \mu_{M,j^{(i)}_m} \quad \supp(\mu_{N,i})=\bigcup_{1\leq k \leq m} \supp(\mu_{M,j^{(i)}_{k}}).\]
\item $[p_{M,t} , r_{M,t}]\subset [p_{N,t} , r_{N,t}]$ for each $t$ such that the left hand interval is defined. 
\end{enumerate}
This leads to a definition of a varying exponent $L^p$ norm in a natural way as a limit from below. 

Actually, to simplify considerations we will consider simple semi-norms of the standard form.
For these semi-norms we define $N \leq M$ if the union of the supports of $M$ includes that of $N$ and moreover $\tilde{p}_N \leq \tilde{p}_M$ holds.
This is  again a directed poset. We define $\mathcal{N}_{\leq p(\cdot)}$ to be the collection of simple semi-norms $N$ of standard form such that $\tilde{p}_N \leq p(\cdot)$. The sought after norms can be defined by applying one of the above orders, but here we will concentrate on the latter.

Let us define a functional as follows: 
\begin{equation}\label{eq: rho}
\rho (f) := \limsup_{N} \|f\|_{N}
\end{equation}
where the $\limsup$ is taken along $N  \in \mathcal{N}_{\leq p(\cdot)}$ such that $\tilde{p}_N \to p$
in measure, i.e.
\[\rho (f) = \inf_{K  \in \mathcal{N}_{\leq p(\cdot)}} \ \sup_{N \in  \mathcal{N}_{\leq p(\cdot)},\ K \leq N}\  \|f\|_{N}. \]

By thinking of the basic properties of $\limsup$ and the simple semi-norms, we observe that 
functions $f\in L^0$ with $\rho (f) <\infty$ form a linear space and $\rho$ is a semi-norm on it.
We call this space $\widetilde{L}^{p(\cdot)}$ and it turns out the semi-norm is in fact a norm when we 
identify functions in the usual way, i.e. according to a.e. coincidence.

We will connect the above limiting process of semi-norms to ODEs. In doing this we are required to use initial values for the 
ODEs and, consequently, for semi-norms as well. Although this procedure, strictly speaking, cancels the semi-norm property, we may 
modify the composite semi-norms in such a way that the resulting functions have an initial value in a natural way. 
Namely, we begin the recursive construction by using $(L^1 (\delta_0 ) \oplus_{1} L^{p_1} (\mu_1))$ as the first term, in place of $L^{p_1} (\mu_1)$, where $\delta_0$ is the Dirac's delta probability measure concentrated at $0$. Then the value $|f(0)|$ serves as the `initial value of the semi-norm'.  

\begin{lemma}\label{lm: approx}
Let $\rho(f)<\infty$. Suppose that there are compact subsets $C_i \subset [0,1]$, $1\leq i\leq n$, $\max C_i \leq \min C_{i+1}$ 
such that $p|_{C_{i}}\equiv p_i \in [1,\infty)$. Assume additionally that $f= 1_{\bigcup_i C_i} f$ and $p|_{[0,1]\setminus \bigcup_i C_i}\equiv 1$. 
Then the mapping $\varphi\colon [0,1]\to \R$ given by 
$\varphi(t)=\rho(1_{[0,t]}f)$ is absolutely continuous and satisfies 
\[\varphi(0)=0 ,\ \varphi'(t)=\frac{|f(t)|^{p(t)}}{p(t)}\varphi(t)^{1-p(t)}\quad \mathrm{for\ a.e.}\ t\in [0,1].\]
\end{lemma}

\begin{proof}

First we observe the analogous claim on an interval with a constant $p$ by studying the following differential equation:
\[\varphi(a)=c ,\ \varphi'(t)=\frac{|f(t)|^p}{p}\varphi(t)^{1-p}\quad \mathrm{for\ a.e.}\ t\in [a,b]\subset [0,1].\]

We use the separability of the above differential equation and the absolute continuity of $\varphi$ to obtain 
\begin{equation}\label{eq: sep}
\int_{a}^b p\varphi'(t) \varphi^{p-1}\ dt=\bigg|_{t=a}^b \varphi^p (t)= \int_{a}^b |f(t)|^p \ dt .
\end{equation}
Indeed, we see immediately that $\varphi$ defined in the formulation of the Lemma is absolutely continuous in this special case. 
The above calculation considered in backward order shows also that in the constant $p$ case $\varphi$ arises as a solution to the above 
differential equation on that interval. 

From this we obtain the analogous compact subset $C\subset [0,1]$ case by passing to function of the type $1_C f$. It is clear that the resulting 
$\varphi$ is again absolutely continuous and the derivative is 
\[\varphi' (t)  = \frac{|1_C (t) f(t)|^p}{p}\varphi(t)^{1-p} = 1_C (t)\frac{|f(t)|^p}{p}\varphi(t)^{1-p}\ \text{a.e.}\ .\]

This way we easily see that the simple semi-norm accumulation functions 
\begin{equation}\label{eq: semi}
t\mapsto \|1_{[0,t]}f\|_{(\ldots (L^{p_1}(\mu_1 )\oplus_{p_2}L^{p_2}(\mu_2 ))\oplus_{p_3} \ldots )\oplus_{p_n} L^{p_{n}}(\mu_n )}
\end{equation}
can be seen as solutions to 
\[\varphi(0)=0 ,\ \varphi'(t)=\frac{1_{\bigcup_i \mathrm{supp}(\mu_i )}\ |f(t)|^{p(t)}}{p(t)}\varphi(t)^{1-p(t)}\quad \mathrm{for\ a.e.}\ t\in [0,1]\]
where $p(t)=p_i$ for $t\in C_i =\mathrm{supp}(\mu_i )$ and $\varphi'(t)=0$ for $t\in [0,1]\setminus \bigcup_i C_i$.
Indeed, for $x_i = \max C_i$ in \eqref{eq: semi} we obtain an ODE
\[\varphi(x_i )= \|1_{[0,x_i ]}f\|,\ \varphi'(t)=\frac{|f(t)|^{p(t)}}{p(t)}\varphi(t)^{1-p(t)}\quad \mathrm{for\ a.e.}\ t\in C_{i+1}\]
by induction. Note that the $\sup$ in the $\limsup$ in \eqref{eq: rho} is actually attained in this simple case with $p(\cdot)$ essentially 
piecewise constant on $ \bigcup_i C_i$.
\end{proof}

Given a measurable function $p\colon [0,1]\to [1,\infty)$,  by Lusin's theorem there is for each $\epsilon>0$ a compact set $C\subset [0,1]$ with $m([0,1]\setminus C)<\epsilon$ such that $p|_C$ is continuous, thus uniformly continuous and bounded. 

Thus we can find a sequence of compact subsets $C_m \subset [0,1]$ such as above with $m(C_m )  \to 1$ and by taking finite unions of such sets we may assume that the sequence is increasing. Next we assume for technical reasons that all the semi-norms have a fixed positive initial value component, say $x_0 1_{\{0\}} \in L^1 (\delta_0 )$ with $x_0 >0$.
We may construct by a diagonal argument a sequence $(N_n )$ of simple semi-norms of standard form (but with the added initial value) such that $\tilde{p}_{N_n} \to p$ in measure and for every $f\in L^\infty$ and $m \in \N$ we have 
\begin{multline*}
\frac{d}{dt} \|1_{[0,t] \cap C_m }f\|_{N_n} =  \frac{|1_{C_m}(t) f(t)|^{\tilde{p}_{N_n}(t)}}{\tilde{p}_{N_n}(t)} N_n(1_{[0,t] \cap C_m }f)^{1-\tilde{p}_{N_n}(t)} \\
\to \frac{|1_{C_m}(t) f(t)|^{p(t)}}{p(t)} \limsup_{n\to\infty} N_n (1_{[0,t] \cap C_m }f)^{1-p(t)}
\end{multline*}
in measure (more precisely, in $L^0 (C_m)$) as $n\to\infty$. Using the initial value $a$ ensures that $N_n(1_{[0,t] \cap C_m }f)^{1-p(t)}$
are uniformly bounded on $C_m$ and we are also using above the fact that 
$|f(t)|^{\tilde{p}_{N_n}(t)}$ are uniformly bounded on $C_m$. Note that these observations imply that
$\limsup_{n\to\infty} N_n (1_{[0,t] \cap C_m }f)$ is absolutely continuous. Thus, the above yields that 
$\lim_{n\to\infty} N_n (1_{[0,t] \cap C_m } f)$ exists for each $t$ and is absolutely continuous on $t$. 
In the same vein, thinking  of the definition of $\rho$, still with the same initial value component in all the semi-norms, we observe that $\rho(1_{[0,t] \cap C_m} f) = \lim_{n\to\infty} N_n (1_{[0,t] \cap C_m } f)$.
In particular we observe that $t\mapsto \rho(1_{[0,t] \cap C_m} f)$ is absolutely continuous and 
\begin{equation}\label{eq: deriv}
\frac{d}{dt} \rho(1_{[0,t] \cap C_m} f) =  \frac{|1_{C_m}(t) f(t)|^{p(t)}}{p(t)} \rho(1_{[0,t] \cap C_m }f)^{1-p(t)}
\end{equation}
exists a.e. on $C_m$. Letting the initial value $x_0 \searrow 0$ the above terms increase and converge to a value
for a.e. $t \in C_m$, so that Lebesgue's monotone converge theorem yields a solution to the same ODE with initial value $0$. 
According to $m(\bigcup_m C_m )=1$ and hence the positivity of \eqref{eq: deriv} in a positive measure set yield that $\rho$ is a norm, instead of merely being a semi-norm. 

We will denote subsequently the normed space of functions $f \in L^0$ with $\rho(f)<\infty$ by $\widetilde{L}^{p(\cdot)}$. We will next refine and collect some findings obtained so far.

\begin{proposition}
Given a measurable function $p\colon [0,1]\to [1,\infty)$, the class $\widetilde{L}^{p(\cdot)}$ is a Banach space 
with the usual point-wise linear operations and the corresponding norm $\rho$ defined above.
\end{proposition}
\begin{proof}
We have already established above that $\widetilde{L}^{p(\cdot)}$ endowed with the functional 
$\|\cdot\|_{\widetilde{L}^{p(\cdot)}}:=\rho$ is a normed space. To prove completeness we will pass on to an equivalent norm 
\begin{equation}\label{eq: sup}
\sup_{N\in \mathcal{N}_{\leq p(\cdot)}} \|f\|_N .
\end{equation}
Indeed, clearly $\sup_{N} \|f\|_N \geq \rho(f)$ and by using Lemma \ref{lm: approx}
and Proposition \ref{prop: ineq} we see an opposite inequality using a multiplicative constant from the later result. Therefore these norms are equivalent, and it becomes clear that \eqref{eq: sup} defines a complete norm on $\widetilde{L}^{p(\cdot)}$. Hence $\rho$ is complete as well.
\end{proof}

\begin{theorem}\label{thm: tilde}
Let $f\in L^0$ and $p \colon [0,1]\to [1,\infty)$ measurable. The following conditions are equivalent:
\begin{enumerate}
\item{$f \in L^{p(\cdot )}$,} 
\item{$f \in \widetilde{L}^{p(\cdot )}$ and the mapping $t\mapsto \rho (1_{[0,t]} f)$ is absolutely continuous.}
\end{enumerate} 
Moreover, in both (equivalent) cases we have $\varphi_f (1)= \rho(f)$.
\end{theorem}

\begin{proof}
Assume that $f\in L^0$, $\rho(f)<\infty$ and $t\mapsto \rho (1_{[0,t]} f)$ is absolutely continuous, as in the second condition. To show the first condition, without loss of generality we may assume that the sequence $C_m$ considered previously satisfies that $p|_{C_m}$ and $f|_{C_m}$ are continuous for every $m$. By a diagonal 
argument we may choose a sequence of simple semi-norms $N_n$ of standard form such that 
$N_n (1_{[0,t]} f) \to \rho(1_{[0,t]} f )$ uniform on $t$ and $\tilde{p}_{N_n} \to p$ in measure as $n\to\infty$.
Thus
\[\frac{d}{dt} N_n (1_{[0,t]} f) \to \frac{|f(t)|^{p(t)}}{p(t)} \rho(1_{[0,t]} f)^{1-p(t)}\]
in $L^0 (C_m )$ as $n\to\infty$. Thus, using the assumed absolute continuity and the boundedness of $f$ and $p$ on $C_m$, we obtain 
\begin{multline*}
\rho(1_{[0,T]} f) -\rho(1_{[0,S]} f) = \int_{S}^T  \frac{d}{dt} \rho(1_{[0,t]} f)\ dt
= \sup_m \int_{[S,T]\cap C_m}  \frac{d}{dt} \rho(1_{[0,t]} f)\ dt \\
= \sup_m \int_{[S,T]\cap C_m} \frac{|f(t)|^{p(t)}}{p(t)} \rho(1_{[0,t]} f)^{1-p(t)}\ dt
=\int_{S}^T \frac{|f(t)|^{p(t)}}{p(t)} \rho(1_{[0,t]} f)^{1-p(t)}\ dt .
\end{multline*}
Strictly speaking, we are also required to control the term $\rho(1_{[0,t]} f)^{1-p(t)}$ which need not be bounded. However, this can be made bounded by using a positive initial value and then letting the initial value tend to zero, similarly as before. Now, $\rho(1_{[0,t]} f)$ clearly is the required solution to the norm determining ODE.
The last part of the statement follows immediately.

The other direction becomes apparent later when we investigate estimates for the norms by means of 
differential equations, see Proposition \ref{prop: ineq}.
\end{proof}

\begin{remark}\label{remark1}
According to the previous result the functional $f \mapsto \varphi_f (1) =: \|f\|_{p(\cdot)}$ satisfies the triangle inequality.
\end{remark}

\section{Inequalities}

Given a measurable function $p\colon [0,1] \to (1,\infty)$ defined a.e., we denote its point-wise H\"older conjugate by $p^* \colon [0,1]\to (1,\infty)$ (defined a.e.), that is, 
\[\frac{1}{p(t)} + \frac{1}{p^*(t)} =1 \quad \text{for\ a.e.}\ t \in [0,1].\]

\begin{proposition}[H\"older]\label{prop: Holder}
Suppose that $f\in L^{p (\cdot)}$ and $g\in L^{p^* (\cdot)}$ with $1<p (t) <\infty$ for a.e. $t$. Then they satisfy H\"older's inequality:
\[\int_{0}^1 |f(t) g(t)|\ dt\leq \|f\|_{p (\cdot)}\|g\|_{q^* (\cdot)}.\]
\end{proposition}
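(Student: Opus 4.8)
\emph{Strategy.} The plan is to run the classical ``Young inequality'' proof of Hölder's inequality, but at the level of the norm-determining ODEs: instead of integrating the pointwise bound $|f(t)g(t)|\le \frac{|f(t)|^{p_1(t)}}{p_1(t)}+\frac{|g(t)|^{p_2(t)}}{p_2(t)}$ against Lebesgue measure, I would integrate a sharpened version of it against the \emph{profiles} of $f$ and $g$, i.e.\ against the solutions $\varphi_f,\varphi_g$. Note first that $p_1^*(\cdot)=p_2(\cdot)$ means $\frac{1}{p_1(t)}+\frac{1}{p_2(t)}=1$ for a.e.\ $t$; since $p_1(\cdot),p_2(\cdot)$ are finite, conjugacy forces $1<p_1(t),p_2(t)<\infty$ a.e., so the classical Young inequality with the conjugate pair $(p_1(t),p_2(t))$ is available pointwise.

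\emph{Setting up the auxiliary product.} Fix $x_0>0$. By condition (vii), applied to $f$ with comparison function $\varphi_f$ (for which $\int_0^1\Upsilon(\varphi_f(t),|f(t)|,t)\,dt=\varphi_f(1)<\infty$) and similarly to $g$, there are absolutely continuous solutions $u:=\varphi_{f,x_0}$ (for exponent $p_1(\cdot)$) and $v:=\varphi_{g,x_0}$ (for exponent $p_2(\cdot)$) on $[0,1]$ with $u(0)=v(0)=x_0$; being non-decreasing from a positive initial value, $u,v$ are strictly positive. Put $\psi:=uv$; then $\psi$ is absolutely continuous on $[0,1]$ with $\psi(0)=x_0^2$ and $\psi(1)=\varphi_{f,x_0}(1)\,\varphi_{g,x_0}(1)$, and, suppressing the variable $t$ and writing $p_i=p_i(t)$, the product rule together with \eqref{eq: def} gives for a.e.\ $t$
\[\psi'=u'v+uv'=\frac{|f|^{p_1}}{p_1}\,u^{1-p_1}v+\frac{|g|^{p_2}}{p_2}\,u\,v^{1-p_2}.\]

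\emph{The key step and conclusion.} One reads the two summands as $\frac{A^{p_1}}{p_1}$ and $\frac{B^{p_2}}{p_2}$ with
\[A=|f|\,u^{\frac{1-p_1}{p_1}}v^{\frac{1}{p_1}},\qquad B=|g|\,u^{\frac{1}{p_2}}v^{\frac{1-p_2}{p_2}},\]
and applies Young's inequality: $\psi'=\frac{A^{p_1}}{p_1}+\frac{B^{p_2}}{p_2}\ge AB$. The exponents of $u$ and of $v$ occurring in $AB$ are $\frac{1-p_1}{p_1}+\frac{1}{p_2}$ and $\frac{1}{p_1}+\frac{1-p_2}{p_2}$, both of which vanish because $\frac{1}{p_1}+\frac{1}{p_2}=1$; hence $AB=|fg|$ and $\psi'(t)\ge|f(t)g(t)|$ for a.e.\ $t$. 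Integrating,
\[\int_0^1|f(t)g(t)|\,dt\le \psi(1)-\psi(0)=\varphi_{f,x_0}(1)\,\varphi_{g,x_0}(1)-x_0^2,\]
and letting $x_0\searrow 0$, the monotone-limit discussion of Section~\ref{sect: general} gives $\varphi_{f,x_0}(1)\searrow\|f\|_{p_1(\cdot)}$ and $\varphi_{g,x_0}(1)\searrow\|g\|_{p_2(\cdot)}$, whence the asserted inequality.

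\emph{Where the work is.} The content is the exact cancellation in the key step: it is precisely the conjugacy relation that forces the powers of $u$ and $v$ to cancel, producing Hölder with constant $1$ (``properly''), and this is what fails in the Luxemburg/Nakano picture. The only points needing care are the legitimacy of combining the product rule with \eqref{eq: def} a.e.\ and the existence of the auxiliary solutions, both handled by the detour through strictly positive initial values; alternatively one can work with the $0^+$-initial-value solutions directly, the only extra remark being that on the set where $\varphi_f\varphi_g$ vanishes --- an initial subinterval on which $f$ or $g$ is a.e.\ zero --- both sides of $\psi'\ge|fg|$ are zero.
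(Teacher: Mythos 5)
Your proof is correct and follows essentially the same route as the paper: both arguments show that the product $\varphi_f\varphi_g$ of the (positive-initial-value) profiles satisfies $(\varphi_f\varphi_g)'\geq |fg|$ a.e.\ and then integrate, the exact cancellation coming from the conjugacy $\tfrac{1}{p_1(t)}+\tfrac{1}{p_2(t)}=1$. The only difference is presentational: the paper obtains the derivative bound by differentiating a two-atom discrete H\"older inequality at $\Delta=0$ (conditioned on \eqref{eq: intleq}), whereas you verify the same pointwise bound directly and unconditionally via Young's inequality, which is arguably the cleaner rendering of the same computation.
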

Although the function classes need not be linear spaces, we still use the norm notation above (instead of $\varphi_f (1)$ etc.); this is to establish a clear connection to the classical case.

\begin{proof}
By using the H\"older inequality for classical $L^p$ and $\ell^p$ spaces, we obtain by induction an analogous statement for spaces of the type 
\[(\ldots (L^{p_1} (\mu_1 ) \oplus_{p_2} L^{p_2}(\mu_2 ))\oplus_{p_3} \ldots )\oplus_{p_n} L^{p_n} (\mu_n )\] 
considered above. That is, if we write 
$\mu (A) = \sum_{i=1}^n \mu_i (A)$ and $f,g \in L^\infty (\mu)$ we have 
\begin{multline*}
\int |fg|\ d\mu \\
\leq \|f\|_{(\ldots (L^{p_1} (\mu_1 ) \oplus_{p_2} L^{p_2}(\mu_2 ))\oplus_{p_3} \ldots )\oplus_{p_n} L^{p_n} (\mu_n )}\\
\cdot \|g\|_{(\ldots (L^{p_{1}^*} (\mu_1 ) \oplus_{p_{2}^*} L^{p_{2}^*}(\mu_2 ))\oplus_{p_{3}^*} \ldots )\oplus_{p_{n}^*} L^{p_{n}^*} (\mu_n )} .  
\end{multline*}

This inequality passes to the limit by an approximation argument similar as seen previously. Namely, we pick by Lusin's theorem an increasing sequence of compact subset $C_n \subset [0,1]$, $n\in\N$, such that $p|_{C_n}$ and $q|_{C_n}$ are continuous. Note that by the compactness of the subset and the continuity of the exponent we have $1< \min_{t \in C_n}p(t) \leq \max _{t \in C_n}p(t) <\infty$ and similar holds for $q(\cdot)$.
Then, similarly as in the proof of Theorem \ref{thm: tilde}, for any sequence of simple semi-norms of standard form $N_k$ with 
$\tilde{p}_{N_k} \to p(\cdot)$ in measure as $k\to\infty$ we have $N_k (1_{C_m} f ) \to \|1_{C_m } f\|_{p(\cdot)}$ as $k\to\infty$. 
By essentially the same argument we also have that if $N_{k}^*$ are the dual simple semi-norms of standard form, obtained by replacing all the exponents with their respective conjugates, then $\tilde{p}_{N_{k}^*} \to p^* (\cdot)$ in measure as $k\to\infty$ and we have 
$N_{k}^* (1_{C_m} g ) \to \|1_{C_m } g\|_{p^* (\cdot)}$ as $k\to\infty$. This shows that
\[\int_{C_m} |fg|\ dt \leq  \|1_{C_m} f\|_{p(\cdot)}  \|1_{C_m} g\|_{p^* (\cdot)}.\]
Letting $m \to \infty$ yields the claimed inequality for $L^\infty$ functions. Finally, by approximating the given functions 
$f\in L^{p (\cdot)}$ and $g\in L^{p^* (\cdot)}$ with functions of the form $1_{D} f, 1_{D} g \in L^\infty$ in measure and using the absolute continuity of the 
solutions $\varphi_f$ and $\varphi_g$ we obtain the statement.
\end{proof}

Going back to simple semi-norms of standard form, note that if $p(\cdot)\equiv p_1$ on $[0, t_0 )$ and  $p(\cdot)\equiv p_2$ on $[t_0 , 1]$ and $f\in L^{p(\cdot)}$ then 
$\|f\|_{L^{p(\cdot)}}=\|f\|_{L^{p_1}(\mu_1 )\oplus_{p_2}L^{p_2}(\mu_2 )}$ where $\mathrm{supp} (\mu_1 )= [0, t_0  ]$ and 
$\mathrm{supp} (\mu_2 )= [t_0 ,1 ]$ (see Lemma \ref{lm: approx}).

It is easy to see that if $p_2 =1$ then letting $p_1 \nearrow \infty$, $t_0 \searrow 0$ we have $\|\1\|_{p(\cdot)} \nearrow 2$. This is perhaps surprising, 
since always $\|\1\|_p =1$ in the constant $p$ case. We may also alter the above example as follows, letting above 
$f_{t_0}\equiv 1/ t_0$ on $[0,t_0 )$ and $f_{t_0}\equiv  1$ on $[t_0,1]$ with $p_1 = 1$ and $p_2 \nearrow \infty$ and $t_0 \to 0^+$ yields that $\|f_{t_0}\|_{p(\cdot)} \to 1$ whereas $\|f_{t_0}\|_1 \to 2$.

We suspect that the above examples are characteristic in the sense that 
\[\frac{1}{2} \|f\|_1 \leq \|f\|_{p(\cdot)}\leq 2\|f\|_\infty\]
should always hold (so that constant $2$ would be the best possible according to the above examples). We leave this open problem for future research. 

In any case, the above inequalities hold 
with other constants in place of $2$. Namely, suppose that $\varphi_f (t_0 ) =  \|f\|_\infty$. Then 
\[\varphi(t_0 )=\|f\|_\infty ,\ \varphi'(t)=\frac{|f(t)|^{p(t)}}{p(t)}\varphi(t)^{1-p(t)}\quad \mathrm{for\ a.e.}\ t_0 \leq t \leq 1 \]
yields
\[\varphi'(t) \leq \varphi(t),\quad \mathrm{for\ a.e.}\ t_0 \leq t \leq 1 . \]
Observe that $\varphi(1) < y(1)$ where $y$ is the solution to $y' = y$ with $y(0)=\|f\|_\infty$, that is, $y(t)=\|f\|_\infty e^t$. 

Let $a\in (1,2)$ be the solution to $a^a =e$. This satisfies that $\frac{b^x}{x}$ is increasing on $x\geq 1$ for all $b>a$.

\begin{proposition}\label{prop: ineq}
The following inequalities hold whenever defined:
\begin{enumerate}
\item{$\frac{1}{1+a} \|1_{p(\cdot)\geq p}f\|_p  \leq \|1_{p(\cdot)\geq p} f\|_{p(\cdot)}$,}
\item{$\frac{1}{1+ae} \|1_{p_{1}(\cdot)\leq p_{2}(\cdot)}f\|_{p_1 (\cdot)}  \leq \|1_{p_1 (\cdot)\leq p_2 (\cdot)} f\|_{p_2 (\cdot)}$,}
\item{$\|f\|_{p(\cdot)} < e\|f\|_\infty $.}
\end{enumerate}
\end{proposition}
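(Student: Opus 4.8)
The plan is to prove the three inequalities in \emph{Proposition~\ref{prop: ineq}} one at a time, exploiting the comparison machinery built in \emph{Lemma~\ref{lm: }}, \emph{Lemma~\ref{lm: second}}, and the facts listed in \eqref{eq: facts}, together with the explicit exponential bound derived just before the statement. For part (3), I would simply formalise the computation already sketched: restricting attention to a.e.\ $t$, the solution $\varphi$ to \eqref{eq: def} satisfies $\varphi'(t) = \frac{|f(t)|^{p(t)}}{p(t)}\varphi(t)^{1-p(t)} \le \frac{\|f\|_\infty^{p(t)}}{p(t)}\varphi(t)^{1-p(t)}$, and since $\varphi$ is non-decreasing with $\varphi(1) \ge$ (whatever value it reaches), the worst case is when $\varphi(t) = \|f\|_\infty$ throughout; monotonicity of $x\mapsto b^x/x$ on $x\ge 1$ for $b > a$ (applied with $b = |f(t)|/\varphi(t) \le 1 < a$, or rather the reverse monotonicity for small base) gives $\varphi'(t) \le \varphi(t)$, so by Gr\"onwall / the comparison \emph{Lemma~\ref{lm: second}} against $y' = y$, $y(0)=\|f\|_\infty$, we get $\varphi(1) \le \|f\|_\infty e$. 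Care is needed with the base of the exponential: the relevant statement is that $\frac{|f(t)|^{p(t)}}{p(t)}\varphi(t)^{1-p(t)} = \frac{1}{p(t)}\bigl(\tfrac{|f(t)|}{\varphi(t)}\bigr)^{p(t)}\varphi(t) \le \varphi(t)$ once $\varphi(t)\ge \|f\|_\infty \ge |f(t)|$ and $p(t)\ge 1$, which is immediate since $\frac{1}{p}s^p \le 1$ for $s\in[0,1]$, $p\ge1$.

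For part (1), I would let $p$ be the constant lower bound in force on the set $E := \{p(\cdot)\ge p\}$ and compare, on $E$, the solution $\varphi_f$ of the $L^{p(\cdot)}$ ODE with the solution $\chi$ of the constant-exponent ODE $\chi' = \frac{|f|^{p}}{p}\chi^{1-p}$ (whose value at $1$ is $\|1_E f\|_p$, by the separable-ODE computation recalled after \eqref{eq: def}). The point is that on $E$ one has $p(t)\ge p$, and I want to bound how much slower $\varphi_f$ can grow than $\chi$. Here the constant $a\in(1,2)$ with $a^a = e$ enters: by the monotonicity property singled out just before the proposition ($b^x/x$ increasing on $x\ge 1$ for $b>a$), whenever the ratio $|f(t)|/\varphi_f(t)$ exceeds $a$ the larger exponent $p(t)$ helps rather than hurts, and whenever it is below $a$ one loses at most a bounded multiplicative factor. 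Summing/integrating the resulting differential inequality and feeding it through \emph{Lemma~\ref{lm: }} (to guarantee the comparison solution stays finite) yields $\|1_E f\|_p \le (1+a)\|1_E f\|_{p(\cdot)}$. I expect this bookkeeping — splitting the time interval according to whether $|f(t)|/\varphi_f(t)\gtrless a$ and controlling the contribution of the ``bad'' region by the factor $1+a$ — to be the \textbf{main obstacle}; it is the crux that makes the constant $1+a$ appear rather than something larger.

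For part (2), I would combine (1) (or its proof technique) with part (3): on the set $\{p_1(\cdot)\le p_2(\cdot)\}$, bound $\|1_{\{p_1\le p_2\}}f\|_{p_1(\cdot)}$ from above using a constant-exponent intermediate as in (1) — losing a factor $(1+a)$ — and then relate that constant-exponent quantity to $\|1_{\{p_1\le p_2\}}f\|_{p_2(\cdot)}$ via the exponential comparison from (3), losing a further factor $e$, for a combined constant $(1+a)e = 1+ae$ after noting $e + ae = (1+a)e$; I should double-check this arithmetic, since the stated constant is $1+ae$ and $(1+a)e = e + ae \ne 1+ae$ in general, so the correct route is presumably: apply the $\varphi' \le \varphi$ bound directly to the $p_2(\cdot)$ solution relative to the $p_1(\cdot)$ solution on the region where they could diverge, picking up $e$ from Gr\"onwall and $1$ from the ``already accumulated'' part, then the extra $a$ from the low-ratio region as in part (1) — giving $1 + a\cdot e$. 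In all three parts the common engine is: reduce to a.e.\ pointwise differential inequalities, invoke \emph{Lemma~\ref{lm: second}} or \emph{Lemma~\ref{lm: }} to pass from the inequality on derivatives to the inequality on the solutions at $t=1$, and handle the subtlety that one of the comparison functions need only be non-decreasing (not absolutely continuous), exactly as in \emph{Lemma~\ref{lm: }}.
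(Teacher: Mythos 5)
Your part (3) is exactly the paper's argument: once $\varphi(t)\geq \|f\|_\infty$ one has $\frac{1}{p(t)}\bigl(\frac{|f(t)|}{\varphi(t)}\bigr)^{p(t)}\varphi(t)\leq \varphi(t)$, and comparison with $y'=y$ gives $\varphi(1)\leq e\|f\|_\infty$. That part is fine.

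For (1) and (2), however, there is a genuine gap, and it sits precisely at the point you flag as the ``main obstacle.'' You propose that on the region where $|f(t)|/\varphi(t)$ (or $|f(t)|$) is below $a$ ``one loses at most a bounded multiplicative factor'' in the differential inequality, and that integrating this yields the constant. No such pointwise multiplicative comparison of the derivatives holds: when $|f(t)|$ is small and $p_2(t)\gg p_1(t)$, the ratio of $\frac{|f|^{p_1}}{p_1}\varphi_{p_1}^{1-p_1}$ to $\frac{|f|^{p_2}}{p_2}\varphi_{p_2}^{1-p_2}$ is unbounded, so a Gr\"onwall-type argument run on that region cannot produce a finite constant. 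The control the paper uses on the bad region is \emph{additive}, not multiplicative: normalize $\|f\|_{p_1(\cdot)}=1+ae$, suppose $\varphi_{p_2(\cdot),f}(1)\leq 1$, and let $[r,1]$ be a maximal interval on which $\varphi_{p_2(\cdot),f}\leq\varphi_{p_1(\cdot),f}$. On $A=\{|f|>a\}$ the $p_2$-derivative dominates the $p_1$-derivative pointwise (here the monotonicity of $b^x/x$ for $b>a$ and the bound $\varphi_{p_2(\cdot),f}\leq 1$ enter, exactly the ingredients you named), so that region contributes nothing to $\varphi_{p_1(\cdot),f}(1)-\varphi_{p_2(\cdot),f}(1)$. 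On $[r,1]\setminus A$ one simply bounds the total $p_1$-norm accumulation by $\|a\1\|_{p_1(\cdot)}\leq e\|a\1\|_\infty=ae$, using part (3) applied to the constant function $a\1$ and the monotonicity of the norm. Subtracting gives $\varphi_{p_2(\cdot),f}(1)\geq(1+ae)-ae=1$, which is where the constant $1+ae$ actually comes from; your final guess at the decomposition of the constant is close, but the $e$ comes from part (3) applied to $a\1$, not from a Gr\"onwall estimate on the $p_2$-solution. Inequality (1) is the same argument with $p_1\equiv p$ constant, where $\|a\1\|_p=a$ and the constant drops to $1+a$; your plan for (1) via a constant-exponent comparison solution $\chi$ is the right starting point but needs this same additive splitting to close.
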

\begin{proof}
The last inequality was already proved, and we will verify the middle inequality which is the most complicated one.

Suppose that $p_1 (\cdot)\leq p_2  (\cdot)$ and $f\in L^{p_1 (\cdot)}$, with $\|f\|_{p_1 (\cdot)} = 1+ae$. 

We wish to exclude the case where $\varphi_{p_2 (\cdot),f}(1) < 1$, so suppose that $\varphi_{p_2 (\cdot),f}(1) \leq 1$. 
Let $[r,1]$ be the maximal interval such that 
$\varphi_{p_2 (\cdot),f}(t) \leq \varphi_{p_1 (\cdot),f}(t)$ (and $\varphi_{p_2 (\cdot),f}(t) \leq 1$) on it. Let $A \subset [0,1]$ be the set where $|f(t)|>a$. 
On $[r,1] \cap A$ we have 
\[\frac{|f(t)|^{p_2 (t)}}{p_2 (t)}\varphi_{p_2 (\cdot),f}(t)^{1-p_2 (t)}\geq \frac{|f(t)|^{p_1 (t)}}{p_1 (t)}\varphi_{p_1 (\cdot),f}(t)^{1-p_1 (t)} .\]
Indeed, in this set we have 
\[\frac{|f(t)|^{p_2 (t)}}{p_2 (t)}\geq  \frac{|f(t)|^{p_1 (t)}}{p_1 (t)}\]
and
\[\varphi_{p_2 (\cdot),f}(t)^{1-p_2 (t)}\geq \varphi_{p_2 (\cdot),f}(t)^{1-p_1 (t)} \geq \varphi_{p_1 (\cdot),f}(t)^{1-p_1 (t)} .\]

Thus 
\begin{multline*}
\varphi_{p_1 (\cdot) ,f}(1) - \varphi_{p_2 (\cdot),f}(1) \\
=\int_{r}^1 \frac{|f(t)|^{p_1 (t)}}{p_1 (t)}\varphi_{p_1 (\cdot),f}(t)^{1-p_1 (t)} - \frac{|f(t)|^{p_2 (t)}}{p_2 (t)}\varphi_{p_2 (\cdot),f}(t)^{1-p_2 (t)}
\ dt\\
=\int_{[r,1]\cap A} \frac{|f(t)|^{p_1 (t)}}{p_1 (t)}\varphi_{p_1 (\cdot),f}(t)^{1-p_1 (t)} - \frac{|f(t)|^{p_2 (t)}}{p_2 (t)}\varphi_{p_2 (\cdot),f}(t)^{1-p_2 (t)}
\ dt\\
+ \int_{[r,1]\setminus A} \frac{|f(t)|^{p_1 (t)}}{p_1 (t)}\varphi_{p_1 (\cdot),f}(t)^{1-p_1 (t)} - \frac{|f(t)|^{p_2 (t)}}{p_2 (t)}\varphi_{p_2 (\cdot),f}(t)^{1-p_2 (t)}
\ dt\\
\leq \int_{[r,1]\setminus A} \frac{|f(t)|^{p_1 (t)}}{p_1 (t)}\varphi_{p_1 (\cdot),f}(t)^{1-p_1 (t)} - \frac{|f(t)|^{p_2 (t)}}{p_2 (t)}\varphi_{p_2 (\cdot),f}(t)^{1-p_2 (t)}
\ dt\\
\leq  \int_{[r,1]\setminus A} \frac{|f(t)|^{p_1 (t)}}{p_1 (t)}\varphi_{p_1 (\cdot),f}(t)^{1-p_1 (t)} \ dt
\leq  \|1_{[r,1]\setminus A} f\|_{p_1 (\cdot)} \leq \|a 1_{[r,1]\setminus A} \|_{p_1 (\cdot)}\\
\leq \|a 1_{[0,1]}\|_{p_1 (\cdot)} \leq e\|a 1_{[0,1]}\|_{\infty}=ae.
\end{multline*}
Thus $\varphi_{p_2 (\cdot),f}(1)\geq (1+ae)-ae =1$. 
\end{proof}

The following fact connects the investigated varying exponent norm to the Nakano $L^{p(\cdot)}$ norms:
\[ |||g|||_{p(\cdot)} := \inf\left\{\lambda >0 \colon \int \frac{1}{p(t)} \left( \frac{|g(t)|}{\lambda} \right)^{p(t)}\ dt \leq 1 \right\}  .\] 

\begin{proposition}\label{prop: upineq}
Let $p\in L^0$, $p(\cdot) \geq 1$, and $f\in L^{p(\cdot)}$ (ODE-determined). Then
\[ |||f|||_{p(\cdot)} \leq \|f\|_{p(\cdot)}\leq  2  |||f|||_{p(\cdot)} .\]
\end{proposition}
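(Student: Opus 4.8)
The plan is to prove the two inequalities separately, in each case reducing to the Carath\'eodory formulation of the defining ODE \eqref{eq: def} and comparing the solution $\varphi_f$ to the Nakano modular. Write $N(\lambda)=\int_0^1 \frac{1}{p(t)}\bigl(|f(t)|/\lambda\bigr)^{p(t)}\,dt$, so that $|||f|||_{p(\cdot)}$ is the infimum of the $\lambda$ with $N(\lambda)\le 1$. First I would establish the left-hand inequality $|||f|||_{p(\cdot)}\le\|f\|_{p(\cdot)}$. Set $\lambda=\|f\|_{p(\cdot)}=\varphi_f(1)$ and consider $\psi(t)=\varphi_f(t)/\lambda=\varphi_f(t)/\varphi_f(1)$, which is non-decreasing with $\psi(1)=1$, hence $\psi(t)\le 1$ on $[0,1]$. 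Using \eqref{eq: def} and the key point that $\psi(t)^{1-p(t)}\ge 1$ (since $\psi(t)\le 1$ and $1-p(t)\le 0$), one gets, for a.e.\ $t$,
\[
\frac{d}{dt}\Bigl(\frac{\varphi_f(t)}{\lambda}\Bigr)
=\frac{1}{\lambda}\cdot\frac{|f(t)|^{p(t)}}{p(t)}\varphi_f(t)^{1-p(t)}
=\frac{1}{p(t)}\,\frac{|f(t)|^{p(t)}}{\lambda^{p(t)}}\,\psi(t)^{p(t)-1}\cdot\psi(t)^{1-p(t)}\cdot\lambda^{p(t)-1}\cdot\lambda^{1-p(t)}\cdots
\]
— more cleanly: $\varphi_f'(t)=\frac{|f(t)|^{p(t)}}{p(t)}\varphi_f(t)^{1-p(t)}\ge \frac{|f(t)|^{p(t)}}{p(t)}\lambda^{1-p(t)}$ because $\varphi_f(t)\le\lambda$. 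Integrating over $[0,1]$ gives $\lambda=\varphi_f(1)=\int_0^1\varphi_f'(t)\,dt\ge \int_0^1 \frac{|f(t)|^{p(t)}}{p(t)}\lambda^{1-p(t)}\,dt=\lambda\,N(\lambda)$, so $N(\lambda)\le 1$, whence $|||f|||_{p(\cdot)}\le\lambda=\|f\|_{p(\cdot)}$.

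For the right-hand inequality $\|f\|_{p(\cdot)}\le 2|||f|||_{p(\cdot)}$, I would argue contrapositively via a comparison/domination argument of the type used in Lemma \ref{lm: second} and Lemma \ref{lm: }. Put $\mu=|||f|||_{p(\cdot)}$, so $N(\mu)\le 1$ (using lower semicontinuity of the modular in $\lambda$, or approaching from above). It suffices to show $\varphi_f(1)\le 2\mu$. The natural candidate for a dominating function is $\psi(t)=\mu\bigl(1+\int_0^t \frac{|f(s)|^{p(s)}}{p(s)\mu^{p(s)}}\,ds\bigr)$, which is non-decreasing, satisfies $\psi(0)=\mu>0$, $\psi(1)=\mu(1+N(\mu))\le 2\mu$, and has derivative $\psi'(t)=\mu\cdot\frac{|f(t)|^{p(t)}}{p(t)\mu^{p(t)}}=\frac{|f(t)|^{p(t)}}{p(t)}\mu^{1-p(t)}$. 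Since $\psi(t)\ge\mu$ and $1-p(t)\le 0$, we have $\mu^{1-p(t)}\ge\psi(t)^{1-p(t)}$, hence $\psi'(t)\ge \frac{|f(t)|^{p(t)}}{p(t)}\psi(t)^{1-p(t)}=\Psi(\psi(t),t)$ in the notation of Lemma \ref{lm: }. Now apply Lemma \ref{lm: } with $\Phi$ the right-hand side of \eqref{eq: def}: the $0^+$-initial-value solution $\varphi_f$ is dominated by $\psi$ (comparing the $x_0$-initial-value solutions for $x_0\le\mu$ via the monotone/mean-reverting facts in \eqref{eq: facts}, then letting $x_0\searrow 0$), so $\varphi_f(1)\le\psi(1)\le 2\mu$.

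The main obstacle I expect is the second inequality, specifically justifying the comparison rigorously when $\varphi_f$ is the $0^+$-initial-value solution rather than a positive-initial-value one, and handling the case $f(t)=0$ on a set of positive measure where $0^{1-p(t)}$ is undefined — this is exactly the subtlety the paper flags around \eqref{eq: def}. The clean route is: first run the comparison for initial value $x_0\in(0,\mu]$, where $\psi$ with $\psi(0)=\mu\ge x_0$ dominates $\varphi_{f,x_0}$ by Lemma \ref{lm: } (or directly Lemma \ref{lm: second} applied to $\varphi_{f,x_0}$ and $\psi$, using that $\varphi_{f,x_0}(t)\ge\psi(t)\Rightarrow\varphi_{f,x_0}'(t)\le\psi'(t)$ from $\Upsilon$ being non-increasing in its first argument), obtaining $\varphi_{f,x_0}(1)\le\psi(1)\le 2\mu$ uniformly in $x_0$; then pass to the limit $x_0\searrow 0$ using the monotone convergence argument already recorded after \eqref{eq: facts}, which gives $\varphi_f(1)=\lim_{x_0\searrow0}\varphi_{f,x_0}(1)\le 2\mu$. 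A secondary routine point is verifying $N(\mu)\le 1$ at the infimum (rather than just for $\lambda>\mu$), which follows from Fatou/monotone convergence as $\lambda\searrow\mu$.
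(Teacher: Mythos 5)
Your proof is correct, and the left-hand inequality is exactly the paper's argument: set $\lambda=\varphi_f(1)$, use $\varphi_f(t)\le\lambda$ to bound $\varphi_f'$ from below by the Nakano integrand at level $\lambda$, and integrate. For the right-hand inequality the paper uses the same key estimate as you do ($\varphi\ge\mu$ and $1-p(t)\le 0$ imply $\varphi^{1-p(t)}\le\mu^{1-p(t)}$) but packages it differently: it normalizes $|||f|||_{p(\cdot)}=1$, lets $t_0$ be the time at which $\varphi_f$ first reaches $1$, bounds $\varphi_f'(t)\le |f(t)|^{p(t)}/p(t)$ on $[t_0,1]$, and integrates to get $\varphi_f(1)\le 1+\int_0^1 |f|^{p}/p\,dt\le 2$. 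Your version replaces the hitting-time split by an explicit supersolution $\psi$ with $\psi(0)=\mu$ and $\psi(1)=\mu(1+N(\mu))\le 2\mu$, compared against $\varphi_{f,x_0}$ for $x_0\le\mu$ via Lemma \ref{lm: second} and then $x_0\searrow 0$. This buys a cleaner treatment of the $0^+$-initial-value solution (the paper's hitting-time argument is silent on this) and of the boundary case $N(\mu)\le 1$ at the infimum, at the cost of invoking the comparison machinery; the paper's route is shorter but more informal on exactly those points.
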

\begin{proof}
To prove the left hand estimate it suffices the check that if $\lambda=\|f\|_{p(\cdot)}$, then 
$ \int_{0}^1 \frac{1}{p(t)} \left( \frac{|g(t)|}{\lambda} \right)^{p(t)}\ dt \leq 1$. So, suppose that $0<\varphi_f (1) = \lambda$, then 
\[\varphi_{f}' (t) \geq \frac{|f(t)|^{p(t)}}{p(t)} \lambda^{1-p(t)},\]
(with strict inequality in a set of positive measure if $f\neq 0$), so that 
\[\lambda = \varphi_{f} (1)\geq  \int_{0}^1  \frac{|f(t)|^{p(t)}}{p(t)} \lambda^{1-p(t)}\ dt=\int_{0}^1 \lambda \frac{1}{p(t)} \left(\frac{|f(t)|}{\lambda}\right)^{p(t)}\ dt.\]
This yields 
\[\int \frac{1}{p(t)} \left(\frac{|f(t)|}{\lambda}\right)^{p(t)}\ dt \leq 1.\]

To check the latter inequality, we may restrict to the case $|||f|||_{p(\cdot)}=1$ by the positive homogeneity of the norms. 
If $\|f\|_{p(\cdot)}\leq 1$ then we have the claim, so assume that $0< t_0 < 1$ is such that $\varphi_{f} (t_0 ) = 1$. 
Then 
\[\varphi_{f}' (t) \leq  \frac{|f(t)|^{p(t)}}{p(t)},\quad \mathrm{for\ a.e.}\ t \in [t_0 ,1].\]
Thus
\[\varphi_f (1) \leq 1+ \int_{t_0}^1 \frac{|f(t)|^{p(t)}}{p(t)}\ dt \leq 1 + \int_{0}^1 \frac{|f(t)|^{p(t)}}{p(t)}\ dt = 1 + |||f|||_{p(\cdot)}= 2.\]
\end{proof}

Thus the above Nakano norms are equivalent to the ODE-driven norms considered here. However, these norms  do not coincide in general.
For example, if $p_1 (\cdot)$ is $1$ on $[0,\frac{1}{2})$ and $2$ on $[\frac{1}{2},1]$ and $p_2 (\cdot)$ is defined in the opposite way, then 
$|||f|||_{p_1 (\cdot)} = |||f|||_{p_2 (\cdot)}$, in the case of Nakano norms (or Musielak-Orlicz norms), for any $f$ with $f(s)=f(\frac{1}{2} + s)$ for $0\leq s <\frac{1}{2}$.
The same rearrangement invariance condition does not hold for the investigated $\|\cdot\|_{p (\cdot)}$-norms. Indeed, for the constant function $\boldsymbol{1}$ we have 
\[\|\boldsymbol{1}\|_{p_1 (\cdot)}=\sqrt{\left(\frac{1}{2}\right)^{2} +  \frac{1}{2}} =\frac{\sqrt{3}}{2} \approx 0.866
<1.207 \approx \frac{1}{\sqrt{2}}+\frac{1}{2} = \|\boldsymbol{1}\|_{p_2 (\cdot)} .\]

We can use the above ideas to construct counterexamples as well.
\begin{example}\label{ex: notin}
Let $p\colon [0,1]\to [1,\infty)$ be a measurable function defined by 
\[\frac{1}{p(t)}\left(\frac{2}{3}\right)^{1-p(t)}=\frac{1}{t-\frac{1}{2}}\]
if $t\in (\frac{1}{2},1]$ and $p(t)=1$ on $[0,\frac{1}{2}]$. Then the constant function $f=\boldsymbol{1}$ is \emph{not} in 
$L^{p(\cdot)}$. 
\end{example}
Assuming to the contrary, clearly $\varphi_f (\frac{1}{2})$ would be $\frac{1}{2}$. 
Suppose that $t_0 > \frac{1}{2}$ is a point such that $\varphi_f (t)\leq \frac{2}{3}$ for $\frac{1}{2} \leq t \leq t_0$. Then we should have
\[\frac{2}{3} - \frac{1}{2}\geq  \varphi_f (t_0 ) -\varphi_f \left(\frac{1}{2}\right) = \int_{\frac{1}{2}}^{t_0 }  \varphi_{f}'\ dt 
\geq \int_{\frac{1}{2}}^{t_0 } \frac{1}{t-\frac{1}{2}} \ dt =\infty ,\]
contradicting the assumption that $f$ was in the class, that is, having an absolutely continuous solution 
$\varphi_f$. However, if we should allow initial values $\varphi_f (0) = x_0 \geq \frac{1}{2}$, then we have nice 
corresponding solutions. 

Also note that $1_{[0,\frac{1}{2}]}+1_{[0,1]}\in L^{p(\cdot)}$. This means that 
in general the $L^{p(\cdot)}$ class need not be an ideal as a function class (cf. Banach lattice
theory), i.e. $g\in  L^{p(\cdot)}$, $f\in L^0$, $|f| \leq g$, does \emph{not} imply $f\in  L^{p(\cdot)}$.

In the above example, we have $\left(1_{[0,\frac{1}{2}]}+1_{[0,1]}\right), 1_{[0,\frac{1}{2}]}\in L^{p(\cdot)}$ and 
$\left(1_{[0,\frac{1}{2}]}+1_{[0,1]}\right) - 1_{[0,\frac{1}{2}]} = 1_{[0,1]}\notin L^{p(\cdot)}$. This shows that for some $p(\cdot)$ the class $L^{p(\cdot)}$ fails to be a linear space.
This example is a manifestation of the principle that the higher the value of $\varphi$, the more stable the differential equation becomes, 
ceteris paribus.

\subsection{The essentially bounded exponent case} Let us take a look at the nice case where 
$\overline{p}:=\esssup_t p(t)<\infty$ as it turns out that the corresponding spaces have less pathological properties.

We observed previously that $L^{p(\cdot)}$ classes need not have the ideal property in general.
However, in the case $\overline{p}<\infty$ conditions $g\in L^{p(\cdot)}$, $|f|\leq g$ imply that also
$f \in L^{p(\cdot)}$. This follows immediately from the following observation.

\begin{proposition}\label{prop: domin}
Suppose that $\overline{p}<\infty$, $g \in L^{p(\cdot)}$, $|f|\leq |g|$, and $0< x_0 <1$ is a given initial value. Then $f\in L^{p(\cdot)}$ and 
\[|\varphi_{f, x_0 }' | \leq |\varphi_{g, x_0 }' | \left(\frac{x_0}{\varphi_{g, x_0 }(1)}\right)^{1-\overline{p}}.\]
\end{proposition}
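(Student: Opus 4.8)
The plan is to estimate the ratio $\varphi_{f,x_0}'(t)/\varphi_{g,x_0}'(t)$ pointwise by comparing the two ODE right-hand sides and using the bound $\overline{p}<\infty$ to control the factor $\varphi^{1-p(t)}$. First I would record that, since $\overline{p}<\infty$, the structure function $\Upsilon(s,x,t)=\tfrac{x^{p(t)}}{p(t)}s^{1-p(t)}$ satisfies condition (vii) for every bounded increasing $\psi$ (the integral $\int_0^1 \Upsilon(\psi(t),|g(t)|,t)\,dt$ is finite because $|g|^{p(t)}\le \max(1,|g|)^{\overline p}$ and $\psi^{1-p(t)}$ is bounded below by $\psi(1)^{1-\overline p}\wedge x_0^{1-\overline p}$ up to constants), so a solution $\varphi_{f,x_0}$ with initial value $x_0$ exists; combined with (viii) this already gives $f\in L^{p(\cdot)}$ once we know $\varphi_{f,x_0}(1)<\infty$, which will follow from the claimed pointwise derivative bound after integration. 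Thus the crux is the displayed inequality.

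For the derivative bound, fix a.e.\ $t$ and write
\[
\varphi_{f,x_0}'(t)=\frac{|f(t)|^{p(t)}}{p(t)}\varphi_{f,x_0}(t)^{1-p(t)},\qquad
\varphi_{g,x_0}'(t)=\frac{|g(t)|^{p(t)}}{p(t)}\varphi_{g,x_0}(t)^{1-p(t)}.
\]
Since $|f|\le|g|$ and both solutions start at the same $x_0$ with nonnegative derivatives, the fact that $\Upsilon$ is non-decreasing in $x$ together with Lemma~\ref{lm: second} (applied as in \eqref{eq: dom}) gives $\varphi_{f,x_0}(t)\le\varphi_{g,x_0}(t)$ for all $t$. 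Because $1-p(t)\le 0$, this yields $\varphi_{f,x_0}(t)^{1-p(t)}\le x_0^{1-p(t)}$ and $\varphi_{g,x_0}(t)^{1-p(t)}\ge \varphi_{g,x_0}(1)^{1-p(t)}$ (the latter using that $\varphi_{g,x_0}$ is non-decreasing and $1-p(t)\le0$). Hence
\[
\frac{\varphi_{f,x_0}'(t)}{\varphi_{g,x_0}'(t)}
=\frac{|f(t)|^{p(t)}}{|g(t)|^{p(t)}}\cdot\frac{\varphi_{f,x_0}(t)^{1-p(t)}}{\varphi_{g,x_0}(t)^{1-p(t)}}
\le \frac{x_0^{1-p(t)}}{\varphi_{g,x_0}(1)^{1-p(t)}}
=\left(\frac{x_0}{\varphi_{g,x_0}(1)}\right)^{1-p(t)}.
\]
Finally, since $x_0<1\le\varphi_{g,x_0}(1)$ the base $x_0/\varphi_{g,x_0}(1)$ lies in $(0,1)$ and $1-p(t)\ge 1-\overline p$, so $\bigl(x_0/\varphi_{g,x_0}(1)\bigr)^{1-p(t)}\le \bigl(x_0/\varphi_{g,x_0}(1)\bigr)^{1-\overline p}$; multiplying back by $\varphi_{g,x_0}'(t)\ge 0$ gives the stated inequality (on the set where $\varphi_{g,x_0}'(t)=0$ one checks directly that $f(t)=0$ a.e.\ there, so $\varphi_{f,x_0}'(t)=0$ too). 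Integrating over $[0,1]$ then bounds $\varphi_{f,x_0}(1)$ by a finite multiple of $\varphi_{g,x_0}(1)$, confirming $f\in L^{p(\cdot)}$.

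The main obstacle I anticipate is the bookkeeping around degenerate points: handling $t$ where $g(t)=0$ (so the ratio $|f|^{p}/|g|^{p}$ is $0/0$ — resolved since there $f(t)=0$ too by $|f|\le|g|$), and justifying the exponent comparison $\varphi_{g,x_0}(t)^{1-p(t)}\ge\varphi_{g,x_0}(1)^{1-p(t)}$ cleanly when $\varphi_{g,x_0}(1)$ could a priori be less than $1$. The paper's hypothesis $x_0<1$ is what makes $x_0/\varphi_{g,x_0}(1)<1$ usable; one should note that if $\varphi_{g,x_0}(1)\le 1$ the inequality is easier still since then both $\varphi$'s stay near $x_0$ and the factor is bounded by $1$. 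The existence clause (via (vii)) is routine once the bound $\overline p<\infty$ is in hand, so I would state it briefly and spend the space on the derivative estimate.
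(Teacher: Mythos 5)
Your argument is correct and follows the same core computation as the paper's own proof: the chain $\varphi_{f,x_0}'/\varphi_{g,x_0}' \leq \bigl(\varphi_{f,x_0}/\varphi_{g,x_0}\bigr)^{1-p(t)} \leq \bigl(x_0/\varphi_{g,x_0}(1)\bigr)^{1-\overline{p}}$ is exactly the paper's, the only difference in scaffolding being that the paper runs the estimate on the piecewise-constant semi-norm approximants of Lemma \ref{lm: approx} and defers existence to Theorem \ref{prop: coincidence}, while you work directly with the Carath\'eodory solutions and get existence from (vii). One small slip: $\varphi_{g,x_0}(1)\geq 1$ is not guaranteed, but all you actually need is $\varphi_{g,x_0}(1)\geq x_0$ (monotonicity of the solution), which already places the base $x_0/\varphi_{g,x_0}(1)$ in $(0,1]$ and makes the exponent comparison valid --- a point you essentially note yourself at the end.
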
 
\begin{proof}
Consider a simple semi-norm $N$ applied to $f$, $g$ and with the above initial value:
$\phi(t)=\|1_{[0,t]} f\|_N$ and $\psi(t)=\|1_{[0,t]}g\|_N$. Clearly $\phi \leq \psi$. Denote by $p(\cdot)$ the corresponding piecewise constant exponent. 
According to Lemma \ref{lm: approx} we may differentiate $\phi$ and $\psi$ a.e. We obtain 
\[\phi' (t) =   \frac{|f(t)|^{p(t)} }{p(t)} \phi^{1-p(t)} (t)\]
and 
\[\psi' (t) = \frac{|g(t)|^{p(t)}}{p(t)} \psi^{1-p(t)} (t)\]
so
\[\frac{\phi' (t)}{\psi' (t)}\leq \left( \frac{\phi (t)}{\psi (t)}\right)^{1-p(t)} \leq \left( \frac{\phi (t)}{\psi (t)}\right)^{1-\overline{p}}
\leq \left( \frac{x_0 }{\psi (1)}\right)^{1-\overline{p}}.\]
The existence of the solution for $f$ follows from subsequent Theorem \ref{prop: coincidence}. 
\end{proof}

In particular we remain within the class if we restrict supports.

\begin{proposition}\label{prop: abs_cont}
Let $\overline{p}<\infty$, $f\in L^{p(\cdot)}$ and $A_n \subset [0,1]$ a sequence of measurable subsets such that $m(A_n) \to 0$ as
$n\to\infty$. Then $\|1_{A_n}f\|_{p(\cdot)}\to 0$ as $n\to\infty$. 
\end{proposition}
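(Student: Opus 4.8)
The goal is absolute continuity of the norm with respect to the measure of the support, in the essentially bounded exponent case. The plan is to leverage two facts already available: first, the ODE characterization via Lemma \ref{lm: main} (so that $\|1_{A_n}f\|_{p(\cdot)} = \psi_n(1)$ where $\psi_n$ solves the norm ODE for $1_{A_n}f$); and second, the crude two-sided bound $\tfrac{1}{2}|||g|||_{p(\cdot)} \le \|g\|_{p(\cdot)} \le 2|||g|||_{p(\cdot)}$ of Proposition \ref{prop: upineq}, which reduces the claim to showing $|||1_{A_n}f|||_{p(\cdot)} \to 0$, i.e. an absolute-continuity statement for the Nakano modular. Actually the cleanest route avoids even that: I would work directly with the modular quantity $\int_0^1 \tfrac{1}{p(t)}|f(t)|^{p(t)}\,dt$, which is finite because $f\in L^{p(\cdot)}$ (by the left-hand inequality of Proposition \ref{prop: upineq} applied to $f$, the integrand $\tfrac{1}{p(t)}(|f(t)|/\|f\|_{p(\cdot)})^{p(t)}$ is integrable, hence so is $\tfrac{1}{p(t)}|f(t)|^{p(t)}$ up to the constant factor $\|f\|_{p(\cdot)}^{\overline p}$ since $\overline p<\infty$).

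First I would fix $\varepsilon>0$ and split $f = f\mathbf{1}_{\{|f|\le M\}} + f\mathbf{1}_{\{|f|> M\}}$ for $M$ large. Using the triangle inequality (Remark \ref{remark1}), $\|1_{A_n}f\|_{p(\cdot)} \le \|1_{A_n}f\mathbf{1}_{\{|f|\le M\}}\|_{p(\cdot)} + \|1_{A_n}f\mathbf{1}_{\{|f|> M\}}\|_{p(\cdot)}$. The second term is bounded by $\|f\mathbf{1}_{\{|f|>M\}}\|_{p(\cdot)}$ (monotonicity of the lattice norm, Theorem \ref{thm: abstract}), which can be made $<\varepsilon/2$ by choosing $M$ large: indeed by Proposition \ref{prop: upineq} it is controlled by $2|||f\mathbf{1}_{\{|f|>M\}}|||_{p(\cdot)}$, and the Nakano norm of the tail tends to $0$ since the modular $\int \tfrac{1}{p(t)}|f(t)|^{p(t)}\,dt$ is finite and dominated convergence applies to $\mathbf{1}_{\{|f|>M\}}\to 0$ a.e. For the first (bounded) term, with $g:=f\mathbf{1}_{\{|f|\le M\}}$ essentially bounded by $M$, I would use the ODE: on the interval $[t_0,1]$ where the solution $\varphi_{1_{A_n}g}$ has already exceeded $1$ we have $\varphi' \le \tfrac{|g|^{p(t)}}{p(t)}\mathbf{1}_{A_n} \le M^{\overline p}\mathbf{1}_{A_n}$, and before it reaches $1$ it is trivially $\le 1$; but in fact since $\varphi_{1_{A_n}g}(1)\to$ something small is what we want, better: bound $\|1_{A_n}g\|_{p(\cdot)} \le 2|||1_{A_n}g|||_{p(\cdot)}$ and note $\int \tfrac{1}{p(t)}\mathbf{1}_{A_n}(t)|g(t)|^{p(t)}\,dt \le M^{\overline p}\, m(A_n) \to 0$, so that the Nakano norm of $1_{A_n}g$ tends to $0$ (for the Luxemburg/Nakano norm, modular $\to 0$ forces norm $\to 0$ when the norm is $\le 1$, which holds eventually). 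Hence the first term is $<\varepsilon/2$ for $n$ large.

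Combining, $\limsup_n \|1_{A_n}f\|_{p(\cdot)} \le \varepsilon$ for every $\varepsilon>0$, giving the claim. The main obstacle I anticipate is the bookkeeping around the Nakano (Luxemburg-type) norm versus its modular: one must be careful that "modular $\to 0$" implies "norm $\to 0$" only once the norm is below $1$, which is why the $\varepsilon/2$ splitting and the use of the crude constant-$2$ equivalence from Proposition \ref{prop: upineq} are essential rather than cosmetic. An alternative, fully ODE-internal argument would instead track the solution $\varphi_{1_{A_n}f,x_0}$ directly: use $\overline p<\infty$ to get, as in the preceding Proposition, that $\varphi'_{1_{A_n}f}(t) \le \mathbf{1}_{A_n}(t)\tfrac{|f(t)|^{p(t)}}{p(t)}\varphi^{1-\overline p}$ once $\varphi \ge 1$, integrate, and use absolute continuity of the finite measure $B\mapsto \int_B \tfrac{1}{p(t)}|f(t)|^{p(t)}\,dt$ with respect to $m$ — this also works and sidesteps the Nakano detour, but requires handling the regime $\varphi<1$ separately, which is the same $\varepsilon$-splitting in disguise.
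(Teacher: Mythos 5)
Your argument is correct, but it is genuinely different from the paper's. The paper proves the proposition entirely inside the ODE framework, and in three lines: fix the initial value $x_0=\epsilon$, invoke the Proposition immediately preceding this one (the derivative-domination estimate $|\varphi_{1_{A_n}f,x_0}'|\leq |\varphi_{f,x_0}'|\,(x_0/\varphi_{f,x_0}(1))^{1-\overline{p}}$ for the dominated function $1_{A_n}f\leq f$), and then use the absolute continuity of $\varphi_{f,x_0}$ — i.e.\ that $\varphi_{f,x_0}'\in L^1$ — to get $\int_{A_n}\varphi_{f,x_0}'\,dt\to 0$, whence $\varphi_{1_{A_n}f,x_0}(1)<2\epsilon$ for large $n$ and the $0^+$-solution is squeezed below $2\epsilon$. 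This is essentially the ``alternative, fully ODE-internal argument'' you sketch at the end, except that the paper handles the small-$\varphi$ regime by starting at $x_0=\epsilon$ rather than by your $\epsilon$-splitting, which is tidier. Your main route instead truncates $f$ at height $M$ and routes everything through the Nakano modular and the constant-$2$ equivalence of Proposition \ref{prop: upineq}; this works, is quantitatively transparent, and reduces the claim to the classical absolute continuity of the finite measure $B\mapsto\int_B \frac{1}{p}|f|^{p}\,dm$, at the price of needing the triangle inequality (Remark \ref{remark1}), the lattice monotonicity, and the ideal property for $\overline{p}<\infty$ as inputs. One justification you give deserves tightening: ``modular $\to 0$ forces norm $\to 0$ when the norm is $\le 1$'' is false for general exponents (it is exactly where unbounded $p$ breaks things); the correct reason, available here, is that for $0<\lambda\le 1$ one has $\int\frac{1}{p}(|g|/\lambda)^{p}\,dm\le \lambda^{-\overline{p}}\int\frac{1}{p}|g|^{p}\,dm$, hence $|||g|||_{p(\cdot)}\le \bigl(\int\frac{1}{p}|g|^{p}\,dm\bigr)^{1/\overline{p}}$ once the modular is at most $1$ — so the hypothesis $\overline{p}<\infty$ is doing real work at this step, not just at the $M^{\overline{p}}m(A_n)$ estimate.
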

\begin{proof}
Fix $\epsilon>0$. We claim that given initial value $x_0 = \epsilon>0$, there is $n_0 \in \N$
such that 
\begin{equation}\label{eq: varphi1A}
\varphi_{1_{A_n}f , x_0} (1) < 2\epsilon,\quad n\geq n_0 .
\end{equation}
This clearly suffices for the statement of the lemma.

The absolute continuity of the solution $\varphi_{f ,x_0}$ implies that 
\[\int_{A_n} \varphi_{f ,x_0}' (t)\ dt \to 0,\quad n\to\infty.\]
Then this observation together with Proposition \ref{prop: domin} yields \eqref{eq: varphi1A}.
\end{proof}

Then $L^\infty \subset  L^{p(\cdot)}$ is dense by the triangle inequality. 
For a general measurable exponent $p\colon [0,1]\to [1,\infty)$ we define a natural Banach subspace 
\[L_{0}^{p(\cdot)} :=\overline{\bigcup_{n\in\N} \left\{1_{p(\cdot )\leq n}\ f\colon f\in \widetilde{L}^{p(\cdot)} \right\} }\subset \widetilde{L}^{p(\cdot)}.\]

\begin{theorem}\label{prop: coincidence}
For a general measurable exponent $p(\cdot)$ the above Banach space satisfies $L_{0}^{p(\cdot)} \subset L^{p(\cdot)}$. 
In particular, in the case $\overline{p}<\infty$, we have $L^{p(\cdot)}=\widetilde{L}^{p(\cdot)}$, and, consequently, $L^{p(\cdot)}$ is a Banach space.
\end{theorem}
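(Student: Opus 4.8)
The plan is to prove the general inclusion $L_0^{p(\cdot)}\subset L^{p(\cdot)}$; the ``in particular'' clause then follows at once, for when $\overline p<\infty$ we have $\{p\le n\}=[0,1]$ (a.e.) as soon as $n\ge\overline p$, so $L_0^{p(\cdot)}=\widetilde{L^{p(\cdot)}}$, while $L^{p(\cdot)}\subset\widetilde{L^{p(\cdot)}}$ with equal norms by Lemma~\ref{lm: main}; hence $L^{p(\cdot)}=\widetilde{L^{p(\cdot)}}$, which is complete. I would reach the inclusion in three layers: (A) bounded functions supported where $p$ is bounded; (B) the generators $1_{\{p\le n\}}f$, $f\in\widetilde{L^{p(\cdot)}}$; (C) their $\widetilde{L^{p(\cdot)}}$-norm limits, i.e.\ all of $L_0^{p(\cdot)}$. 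Throughout I use that $\|\cdot\|_{\widetilde{L^{p(\cdot)}}}$ is a lattice norm that is order-continuous from below (as exploited in the proof of Lemma~\ref{lm: main} and Remark~\ref{remark2}), that $\varphi_f(t)=\|1_{[0,t]}f\|_{\widetilde{L^{p(\cdot)}}}$ for $f\in L^{p(\cdot)}$ (Lemma~\ref{lm: main} applied to $1_{[0,t]}f$, using the Markovian property), the comparison Lemma~\ref{lm: second}, and the triangle inequality for $\|\cdot\|_{L^{p(\cdot)}}$ (Remark~\ref{remark1}).

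\emph{Layers (A) and (B).} If $h\in L^\infty$ with $\mathrm{supp}(h)\subset\{p\le n\}$, then on $\{h\ne0\}$ the field $s\mapsto\frac{|h(t)|^{p(t)}}{p(t)}s^{1-p(t)}$ is, for $s$ in a compact subinterval of $(0,\infty)$, bounded and Lipschitz in $s$ uniformly in $t$, so $(\ast)$ holds by Carath\'eodory's local existence--uniqueness theorem; moreover any solution with initial value $x_0>0$ satisfies $\varphi'\le\max(1,\|h\|_\infty)^n\max(1,x_0^{1-n})$ and so cannot blow up, whence by the no-blow-up dichotomy for non-negative $(\ast)$-equations it exists on all of $[0,1]$ with finite value; letting $x_0\searrow0$ and applying monotone convergence to the Picard formula (see \eqref{eq: facts} and the paragraph following it) yields the $0^+$-solution, so $h\in L^{p(\cdot)}$. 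For (B), put $g=1_{\{p\le n\}}f$ and $g^{(k)}=\sign(g)\min(|g|,k)\in L^\infty$; these lie in $L^{p(\cdot)}$ by (A), $|g^{(k)}|\nearrow|g|$, and by Lemma~\ref{lm: main} $\varphi_{g^{(k)}}(t)=\|1_{[0,t]}g^{(k)}\|_{\widetilde{L^{p(\cdot)}}}$ is absolutely continuous and solves the ODE, so $\varphi_{g^{(k)}}(t)\nearrow\varphi(t):=\|1_{[0,t]}g\|_{\widetilde{L^{p(\cdot)}}}$ with $\varphi(1)=\|g\|_{\widetilde{L^{p(\cdot)}}}<\infty$. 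Proposition~\ref{prop: upineq} gives $|||g^{(k)}|||_{p(\cdot)}\le\|g^{(k)}\|_{p(\cdot)}\le\Lambda:=\|g\|_{\widetilde{L^{p(\cdot)}}}$; monotone convergence then yields $\int_0^1\frac1{p(t)}(|g|/\Lambda)^{p(t)}\,dt\le1$, and since $\mathrm{supp}(g)\subset\{p\le n\}$ this forces $\int_0^1\frac{|g(t)|^{p(t)}}{p(t)}\,dt\le\max(1,\Lambda)^n<\infty$. Now pass the integrated ODE to the limit in $k$: on the initial interval where $\varphi\equiv0$ one has $g=0$ a.e.\ (nondegeneracy of the norm), so both sides vanish; on $[t^*+\delta,1]$, with $t^*=\sup\{\varphi=0\}$ and $c_\delta=\tfrac12\varphi(t^*+\delta)>0$, the integrand $\frac{|g^{(k)}|^{p(t)}}{p(t)}\varphi_{g^{(k)}}^{1-p(t)}$ is, for large $k$, dominated by the integrable $\frac{|g(t)|^{p(t)}}{p(t)}\max(1,c_\delta^{1-n})$, so dominated convergence applies; letting $\delta\searrow0$ (using $\varphi(t^*+\delta)=\|1_{(t^*,t^*+\delta]}g\|_{\widetilde{L^{p(\cdot)}}}\le 2|||1_{(t^*,t^*+\delta]}g|||_{p(\cdot)}\to0$, the last from $\int\frac{|g|^{p(t)}}{p(t)}<\infty$) gives $\varphi(T)=\int_0^T\frac{|g(t)|^{p(t)}}{p(t)}\varphi(t)^{1-p(t)}\,dt$, so $\varphi=\varphi_g$ and $g\in L^{p(\cdot)}$.

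\emph{Layer (C).} Fix $g\in L_0^{p(\cdot)}$ and set $g_n=1_{\{p\le n\}}g$, which lies in $L^{p(\cdot)}$ by (B); then $|g_n|\nearrow|g|$, and since $g\in L_0^{p(\cdot)}$, $\|g_n-g_m\|_{\widetilde{L^{p(\cdot)}}}=\|1_{\{m<p\le n\}}g\|_{\widetilde{L^{p(\cdot)}}}\le\|1_{\{p>m\}}g\|_{\widetilde{L^{p(\cdot)}}}\to0$ (note $g_n-g_m=1_{\{p\le n\}}(1_{\{p>m\}}g)$ is again covered by (B)). The triangle inequality gives $|\varphi_{g_n}(t)-\varphi_{g_m}(t)|\le\varphi_{|g_n-g_m|}(1)=\|g_n-g_m\|_{\widetilde{L^{p(\cdot)}}}\to0$, so $\varphi_{g_n}\to\varphi$ uniformly, $\varphi$ is continuous, $\varphi(1)=\|g\|_{\widetilde{L^{p(\cdot)}}}<\infty$, and $\varphi(t)=\|1_{[0,t]}g\|_{\widetilde{L^{p(\cdot)}}}$. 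Now split $\varphi_{g_n}(T)=\int_0^T1_{\{p\le N\}}\varphi_{g_n}'\,dt+\int_0^T1_{\{p>N\}}\varphi_{g_n}'\,dt$. For $n\ge N$ the exponent is bounded on $\{p\le N\}$ and $g_n=g$ there, so monotone/dominated convergence gives $\int_0^T1_{\{p\le N\}}\varphi_{g_n}'\,dt\to\int_0^T1_{\{p\le N\}}\Upsilon(\varphi,|g|,t)\,dt$; and by Lemma~\ref{lm: second} (monotonicity of the solution in the data) $\int_0^T1_{\{p>N\}}\varphi_{g_n}'\,dt=\int_0^T1_{\{N<p\le n\}}\Upsilon(\varphi_{g_n},|g|,t)\,dt\le\int_0^T\varphi_{1_{\{N<p\le n\}}g}'\,dt\le\|1_{\{p>N\}}g\|_{\widetilde{L^{p(\cdot)}}}$. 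Letting $n\to\infty$ and then $N\to\infty$ gives $\varphi(T)\le\int_0^T\Upsilon(\varphi(t),|g(t)|,t)\,dt$, while Fatou's lemma gives the reverse inequality; hence $\varphi$ solves the ODE, is absolutely continuous, and $g\in L^{p(\cdot)}$. This establishes $L_0^{p(\cdot)}\subset L^{p(\cdot)}$.

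\emph{Main obstacle.} The genuinely delicate point is Layer (C): one cannot push $\varphi_{g_n}'=\Upsilon(\varphi_{g_n},|g_n|,t)$ to the limit directly, since the exponent is unbounded, $\varphi(t)^{1-p(t)}$ is uncontrolled near the bottom of the support, and the integrands are neither monotone nor dominated in $n$. The device that makes it work is the decomposition over $\{p\le N\}$ and $\{p>N\}$ together with the estimate $\int_0^T1_{\{p>N\}}\varphi_{g_n}'\,dt\le\|1_{\{p>N\}}g\|_{\widetilde{L^{p(\cdot)}}}$, which tends to $0$ precisely because $g\in L_0^{p(\cdot)}$; everything else reduces to Lemma~\ref{lm: main}, Proposition~\ref{prop: upineq}, and standard convergence theorems.
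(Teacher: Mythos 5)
Your argument is correct in substance and reaches the theorem by a route that is organized quite differently from the paper's, even though the two share the same core idea for the unbounded part. The paper proves the bounded case first and independently: it takes Lusin compact sets $D_i$ on which $f$ and $p$ are continuous, works with a fixed positive initial value, derives the two-sided absolute-continuity estimate \eqref{eq: abs_cont} (which is what makes $\psi$ absolutely continuous), and then upgrades pointwise to uniform convergence via Dini's theorem and passes to the limit in $L^1$; the general case is then handled with $f_n=P_nf$, Egorov's theorem on $\{p\le k\}$, Proposition \ref{prop: abs_cont}, and the smallness of the tail solutions. You instead prove the general inclusion $L_{0}^{p(\cdot)}\subset L^{p(\cdot)}$ first and recover the bounded case as a one-line corollary of Lemma \ref{lm: main} (a clean inversion of the logic that the paper does not exploit), and you replace the Lusin/Egorov/Dini machinery entirely by monotone truncation: direct Carath\'eodory existence for bounded data on bounded-exponent sets, value-truncation $g^{(k)}$ with domination anchored on the Nakano comparison of Proposition \ref{prop: upineq} (which hands you $\int\frac{|g|^{p(t)}}{p(t)}\,dt<\infty$, a fact the paper only records as a corollary \emph{after} the theorem, via \eqref{eq: abs_cont}), and then the $\{p\le N\}/\{p>N\}$ splitting with the tail bound $\int_0^T 1_{\{p>N\}}\varphi_{g_n}'\,dt\le\|1_{\{p>N\}}g\|_{\widetilde{L^{p(\cdot)}}}$, which plays exactly the role of the paper's ``minorating solutions tend to $0$'' step. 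What your approach buys is a more self-contained and measure-theoretically lighter argument plus the free deduction of the bounded case; what the paper's buys is the explicit two-sided estimate \eqref{eq: abs_cont}, which it reuses. Two points you lean on deserve a word but are no worse than the paper's own standard of rigor: the order-continuity from below of $\|\cdot\|_{\widetilde{L^{p(\cdot)}}}$ (an interchange of $\sup_k$ with the $\limsup$ over the directed net of semi-norms, essentially Remark \ref{remark2}), and the identification of the limit function you construct with the $0^+$-initial-value solution required by the definition of $L^{p(\cdot)}$ (fixable by noting that $\varphi+x_0$ is a supersolution, so Lemma \ref{lm: } gives the $x_0$-solutions and they are squeezed between $\varphi$ and $\varphi+x_0$).
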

\begin{proof}
First we will verify the latter part of the statement, so assume that $\overline{p}<\infty$. Let $f\in \widetilde{L}^{p(\cdot)}$.
Similarly as above, let $D_i \subset [0,1]$, $i\in\N$, be measurable compact subsets such that both
$f|_{D_i}$ and $p|_{D_i}$ are continuous and $m(D_i )\to 1$ as $i\to \infty$. Put 
\[\psi_i (t) := \|1_{[0,t]\cap D_i} f\|_{\widetilde{L}^{p(\cdot)}}\]
where we consider the functions with a joint initial value $0<x_0 <1$. We obtain that $\psi_i$ are absolutely continuous and 
\[\psi_{i}' =  \frac{|1_{D_i} f|^{p(t)}}{p(t)}\psi_{i}^{1-p(t)} (t)\ \text{a.e. }\]

Note that 
\[\psi_i (t) \nearrow \psi (t) := \|1_{[0,t]}f\|_{\widetilde{L}^{p(\cdot)}}\]
for $t\in [0,1]$ as $i\to\infty$ by the absolute continuity of simple semi-norms.

Using the positivity of the initial value $x_0$ and $\overline{p}<\infty$, we obtain by studying the derivatives $\psi_{i}'$ that 
\begin{equation}\label{eq: abs_cont}
\psi(1)^{1-\overline{p}}\int_{r}^t \frac{|f(s)|^{p(s)}}{p(s)}\ ds \leq \psi(t) - \psi(r) \leq x_{0}^{1-\overline{p}}\int_{r}^t \frac{|f(s)|^{p(s)}}{p(s)}\ ds .
\end{equation}
We conclude that $\psi$ is absolutely continuous and $\frac{|f|^{p(\cdot)}}{p(\cdot)} \in L^1$. 

According to Dini's theorem
$\psi_i \to \psi$ converges uniformly on $[0,1]$. Moreover, by using again the positivity of the initial value and $\overline{p}<\infty$
we get that $\psi_{i}^{1-p(t)}(t) \to \psi^{1-p(t)}(t)$ in $L^\infty$-norm as $i\to\infty$. Thus 
\[\psi_{i}' \to  \frac{|f|^{p(t)}}{p(t)}\psi^{1-p(t)}\]
in $L^1$ and hence
\[\psi(T)= x_0 +\int_{0}^T \frac{|f|^{p(t)}}{p(t)}\psi^{1-p(t)},\quad T\in [0,1].\]
This shows that $\psi$ is a solution witnessing the fact that $f\in L^{p(\cdot)}$, since $x_0$ was arbitrary.

To verify the first part of the statement, fix $f\in L_{0}^{p(\cdot)}$ and we aim to show that $f\in L^{p(\cdot)}$,
i.e. that there is a solution $\varphi_f$. Let $f_n = 1_{p(\cdot)\leq n}\ f$ for $n\in\N$. 
Clearly $f_{n} \nearrow f$ a.e. as $n\to\infty$. Similarly as above, it follows from the triangle inequality
that $\varphi_{f_n}\nearrow \phi$ uniformly for a suitable $\phi$. We consider all the solutions with a joint 
positive initial value $x_0 >0$.

For each $k\in \N$ and $\varepsilon>0$ there exist by Egorov's theorem a set 
$D \subset \{t\in [0,1]\colon p(t)\leq k\}$ such that $m(\{t\in [0,1]\colon p(t)\leq k\}\setminus D)<\varepsilon$
and 
\[\frac{|f_n(t)|^{p(t)} }{p(t)} \varphi_{f_n}^{1-p(t)} (t) \to \frac{|f(t)|^{p(t)} }{p(t)} \phi^{1-p(t)} (t)\]
uniformly on $D$ as $n\to\infty$.
Thus 
\[\int_D \frac{|f_n(t)|^{p(t)} }{p(t)} \varphi_{f_n}^{1-p(t)} (t)\ dt \to \int_D \frac{|f(t)|^{p(t)} }{p(t)} \phi^{1-p(t)} (t)\ dt .\]
Since $\varepsilon$ was arbitrary we get by Proposition \ref{prop: abs_cont} that 
\[\int_{p(t)\leq k} \frac{|f_n(t)|^{p(t)} }{p(t)} \varphi_{f_n}^{1-p(t)} (t)\ dt \to \int_{p(t)\leq k}  \frac{|f(t)|^{p(t)} }{p(t)} \phi^{1-p(t)} (t)\ dt \]
for each $k\in\N$. 
Since $\|f- 1_{p(\cdot)\leq n}\  f\|_{\widetilde{L}^{p(\cdot)}}\to 0$, we see that 
\[ \lim_{k\to\infty} \int_{p(t)\geq k} \frac{|f(t)|^{p(t)} }{p(t)} = 0 .\]

It follows that 
\[\int_{0}^T \frac{|f_n(t)|^{p(t)} }{p(t)} \varphi_{f_n}^{1-p(t)} (t)\ dt \to \int_{0}^T \frac{|f(t)|^{p(t)} }{p(t)} \phi^{1-p(t)} (t)\ dt,\quad T\in [0,1]. \]
Taking into account that $\varphi_{f_n}\to \phi$ uniformly, we see that
\[\phi(T)= x_0 + \int_{0}^T \frac{|f(t)|^{p(t)} }{p(t)} \phi^{1-p(t)} (t)\ dt,\quad T\in [0,1]. \]
\end{proof}

The above argument (recall \eqref{eq: abs_cont})  yields the following fact.
\begin{proposition}
If $\overline{p}<\infty$, then $f \in L^0$ is in $L^{p(\cdot)}$ if and only if 
\[\int_{0}^1 \frac{|f(t)|^{p(t)}}{p(t)}\ dt <\infty.\]
\end{proposition}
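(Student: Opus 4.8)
The plan is to prove both implications, leveraging the inequality \eqref{eq: abs_cont} that was established during the proof of Theorem \ref{prop: coincidence}. Recall that \eqref{eq: abs_cont} states, for the accumulation function $\psi(t)=\|1_{[0,t]}f\|_{\widetilde{L^{p(\cdot)}}}$ of an $f\in\widetilde{L^{p(\cdot)}}$ computed with a positive initial value $x_0$, the two-sided bound
\[
\psi(1)^{1-\overline{p}}\int_{r}^{t}\frac{|f(s)|^{p(s)}}{p(s)}\ ds\ \leq\ \psi(t)-\psi(r)\ \leq\ x_0^{1-\overline{p}}\int_{r}^{t}\frac{|f(s)|^{p(s)}}{p(s)}\ ds,
\]
valid on any subinterval, and that by Theorem \ref{prop: coincidence} in the case $\overline{p}<\infty$ we have $L^{p(\cdot)}=\widetilde{L^{p(\cdot)}}$, so the $0^+$-initial-value solution $\varphi_f$ exists and equals $\|\cdot\|_{\widetilde{L^{p(\cdot)}}}$-accumulation precisely when $f\in L^{p(\cdot)}$.

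For the forward direction, suppose $f\in L^{p(\cdot)}$. Then $\varphi_f$ is an absolutely continuous solution on $[0,1]$ with $\varphi_f(1)<\infty$; since $\varphi_f(0)=0^+$ is the decreasing limit of positive-initial-value solutions $\varphi_{f,x_0}$, we have $\varphi_{f,x_0}(1)\geq \varphi_f(1)$ but more importantly the left half of \eqref{eq: abs_cont}, applied with $r=0$, $t=1$ and any $x_0>0$, reads
\[
\psi(1)^{1-\overline{p}}\int_{0}^{1}\frac{|f(s)|^{p(s)}}{p(s)}\ ds\ \leq\ \psi(1)-\psi(0)\ \leq\ \varphi_f(1)<\infty,
\]
with $\psi=\varphi_{f,x_0}$. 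Since $\psi(1)\geq x_0>0$ is a fixed finite number and $1-\overline{p}\leq 0$, the factor $\psi(1)^{1-\overline{p}}$ is a positive finite constant, so $\int_0^1 |f(s)|^{p(s)}/p(s)\,ds<\infty$. (Alternatively one can simply cite Proposition \ref{prop: upineq}, which already gives $|||f|||_{p(\cdot)}<\infty$, and then observe that $\int \frac{1}{p(t)}(|f(t)|/\lambda)^{p(t)}\,dt\leq 1$ for $\lambda=|||f|||_{p(\cdot)}$ forces $\int \frac{|f(t)|^{p(t)}}{p(t)}\,dt\leq \max(1,\lambda^{\overline{p}})<\infty$; but the direct route via \eqref{eq: abs_cont} is cleaner.)

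For the converse, suppose $\int_0^1 |f(t)|^{p(t)}/p(t)\,dt<\infty$. The strategy is to produce a positive-initial-value solution with finite terminal value and then invoke Theorem \ref{prop: coincidence}. Pick any $x_0>0$ and approximate $f$ by truncations $f_i=1_{D_i}f$ on Lusin-type compact sets $D_i$ on which both $f$ and $p$ are continuous, with $m(D_i)\to 1$; for each $i$ Lemma \ref{lm: approx} gives an absolutely continuous accumulation function $\psi_i$ solving the ODE with initial value $x_0$, and the right half of \eqref{eq: abs_cont} yields the uniform-in-$i$ bound
\[
\psi_i(1)\ \leq\ x_0+x_0^{1-\overline{p}}\int_{0}^{1}\frac{|1_{D_i}(t)f(t)|^{p(t)}}{p(t)}\ dt\ \leq\ x_0+x_0^{1-\overline{p}}\int_{0}^{1}\frac{|f(t)|^{p(t)}}{p(t)}\ dt\ <\ \infty.
\]
Hence the $\psi_i$ are uniformly bounded; by the monotone behavior $\psi_i\nearrow\psi$ pointwise (since $|1_{D_i}f|$ increases to $|f|$) and $\psi(1)<\infty$, so $f\in\widetilde{L^{p(\cdot)}}$, and Theorem \ref{prop: coincidence} then gives $f\in L^{p(\cdot)}=\widetilde{L^{p(\cdot)}}$. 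The main obstacle — and it is mild — is making sure the limiting function $\psi$ really is a solution of the ODE rather than merely a pointwise limit of solutions; but this is exactly the monotone-convergence argument already carried out in the proof of Theorem \ref{prop: coincidence} (the factor $\psi^{1-p(t)}$ converges in $L^\infty$ because $\psi$ stays bounded away from $0$ thanks to $x_0>0$ and stays bounded above by the estimate just displayed), so essentially no new work is needed beyond citing \eqref{eq: abs_cont} and Theorem \ref{prop: coincidence}.
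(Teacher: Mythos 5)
Your proposal is correct and is essentially the paper's own proof: the paper gives no argument beyond the remark ``The above argument (recall \eqref{eq: abs_cont}) yields the following fact,'' and what you write out --- the left half of \eqref{eq: abs_cont} for necessity, the right half together with the Lusin-set approximation and monotone-convergence machinery from the proof of Theorem \ref{prop: coincidence} for sufficiency --- is exactly the intended fleshing-out of that remark.
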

\qed

The $L^{p(\cdot)}$ space construction here can be generalized to a multidimensional setting 
$L^{p(\cdot)} (\Omega)$ with domains $\Omega \subset \R^n$, $n>1$. 
There appear to be several ways to accomplish this. For example, in some cases $\Omega$ can be 
conveniently decomposed to level sets of $p(\cdot)$, then taking the $L^p$ norms relative to each level set and using the approach here to aggregate the $L^p$ norms yields a 
$L^{p(\cdot)} (\Omega)$ norm. We leave this for future research.

\subsection*{Acknowledgments}

This work has been financially supported by research grants from the V\"ais\"al\"a foundation and the Finnish Cultural Foundation 
and the Academy of Finland Project \# 268009.


\begin{thebibliography}{DGZ}
\bibitem{bo}
Z. Birnbaum; W. Orlicz,  \"Uber die Verallgemeinerung des Begriffes der zueinander Konjugierten Potenzen, 
Studia Math. 3 (1931), 1--67.
\bibitem{kalton1} G. Androulakis, C. Cazacu, N. Kalton, Twisted sums, Fenchel-Orlicz spaces and property (M). Houston J. Math. 24 (1998), 105--126.
\bibitem{DE_book}  E. Coddington, N. Levinson, Theory of ordinary differential equations. McGraw-Hill Book Company, Inc., New York-Toronto-London, 1955.
\bibitem{die0}
L. Diening, M. Ruzicka, Calderon-Zygmund operators on generalized Lebesgue spaces $L^{p(\cdot)}$ and problems related to fluid dynamics.
J. Reine Angew. Math. 563 (2003), 197--220.
\bibitem{die}
L. Diening, P. H\"ast\"o, S. Roudenko, 
Function spaces of variable smoothness and integrability, J. Funct. Anal. 256 (2009), 1731--1768.
\bibitem{FA_book} M. Fabian, P.  Habala, P. H\'ajek, V. Montesinos Santalucía, J. Pelant, V. Zizler, 
Functional analysis and infinite-dimensional geometry. CMS Books in Mathematics/Ouvrages de Mathématiques de la SMC, 8. Springer-Verlag, New York, 2001.
\bibitem{JKL}
J.E. Jamison, A. Kami\'nska, P.-K. Lin, Isometries of Musielak-Orlicz spaces II, Studia Math. 104 (1993), 75--89.
\bibitem{kalton2} N. Kalton, Extension of linear operators and Lipschitz maps into C(K)-spaces. New York J. Math. 13 (2007), 317--381.
\bibitem{kovacik} O. Kov\'a\u{c}ik, J. R\'akosn\'ik, On spaces $L^{p(x)}$ and $W^{k,p(x)}$. Czechoslovak Math. J. 41 (1991), 592--618.
\bibitem{LT} J. Lindenstrauss, L. Tzafriri, Classical Banach Spaces (Classics in Mathematics) Paperback, Springer.
\bibitem{lux}
W. Luxemburg, Banach function spaces, T.U. Delft (1955) (Thesis).
\bibitem{Maligranda}
L. Maligranda, Hidegoro Nakano (1909-1974) - on the centenary of his birth.
In: Banach and Function Spaces III, Proceedings of the Third International Symposium on Banach and Function Spaces 2009, Sep. 14-17, 2009, Kitakyushu-Japan (Editors M. Kato, L. Maligranda and T. Suzuki), Yokohama Publishers 2011.
\bibitem{mar}
J. Marcinkiewicz, Sur l'interpolation d'op\'erations C.R. Acad. Sci. Paris, 208 (1939) 1272--1273
\bibitem{mus}
J. Musielak, Orlicz spaces and modular spaces, Lecture Notes in Mathematics, Vol. 1034 Springer-Verlag, Berlin/New York (1983).
\bibitem{nakai} E. Nakai, Y. Sawano, Hardy spaces with variable exponents and generalized Campanato spaces. J. Funct. Anal. 262 (2012), 3665--3748.
\bibitem{rr} M. Rao, Z. Ren,
Theory of Orlicz spaces, Monographs and Textbooks in Pure and Applied Mathematics, vol. 146 Marcel Dekker, Inc., New York (1991).
\bibitem{rr2} M. Rao, Z. Ren, Applications of Orlicz Spaces, CRC Press, 2002.
\bibitem{rudin}
W. Rudin, Real and Complex Analysis, McGraw-Hill 1987.
\bibitem{Sobczyk}
A. Sobczyk, Projections in Minkowski and Banach spaces, Duke Math. J. 8 (1941), 78--106.
\bibitem{talponen} J. Talponen, A natural class of sequential Banach spaces. Bull. Pol. Acad. Sci. Math. 59 (2011), 185--196.
\bibitem{bej}
B. Yaacov, Modular functionals and perturbations of Nakano spaces, Logic and Analysis 1 (2009), 1--42.
\end{thebibliography}
\end{document}